\newtheorem{theorem}{Theorem}[section]
\newtheorem{lemma}[theorem]{Lemma}
\newtheorem{proposition}[theorem]{Proposition}
\theoremstyle{definition}
\newtheorem{defn}[theorem]{Definition}
\theoremstyle{remark}
\newtheorem{remark}[theorem]{Remark}
\numberwithin{equation}{section}
\newcommand{\RNum}[1]{\uppercase\expandafter{\romannumeral #1\relax}}
\newcommand{\BibTeX}{B\kern-0.1emi\kern-0.017emb\kern-0.15em\TeX}
\newcommand{\XYpic}{$\mathrm{X\kern-0.3em\raisebox{-0.18em}{Y}}$-$\mathrm{pic}\,$}
\newcommand{\cl}{C \kern -0.1em \ell}  
\newcommand{\Rm}{\mathbb{R}^m}
\newcommand{\R}{\mathbb{R}}
\newcommand{\Clm}{\mathcal{C}l_{m-1}}
\newcommand{\Bm}{\mathbb{B}^{m}}
\newcommand{\Sm}{\mathbb{S}^{m-1}}
\newcommand{\Hk}{\mathcal{H}_k}
\newcommand{\Mk}{\mathcal{M}_k}
\newcommand{\Mkk}{\mathcal{M}_{k-1}}
\newcommand{\be}{\boldsymbol{e}}
\newcommand{\boy}{\boldsymbol{y}}
\newcommand{\bo}{\boldsymbol}
\newcommand{\bx}{\boldsymbol{x}}
\newcommand{\bu}{\boldsymbol{u}}
\newcommand{\bov}{\boldsymbol{v}}
\newcommand{\bola}{\bo{a}}
\newcommand{\bt}{\boldsymbol{t}}
\newcommand{\bzeta}{\boldsymbol{\zeta}}
\newcommand{\bbeta}{\boldsymbol{\eta}}
\newcommand{\bgamma}{\boldsymbol{\gamma}}
\newcommand{\baa}{\begin{align*}}
\newcommand{\eaa}{\end{align*}}
\newcommand{\ovp}{\overline{\partial}}
\newcommand{\ed}{\end{document}}
\begin{document}

%
%
%
%
%
%
%
%
%

\title[The higher spin complex $\Pi$-operator]
 {On a higher spin generalization of the complex $\Pi$-operator}
\author[Wanqing Cheng]{Wanqing Cheng}
\address{
School of Mathematics \& Physics,\\
Anhui Jianzhu University, Hefei, P.R. China}
\email{cwq@ahjzu.edu.cn}

\author[Chao Ding]{Chao Ding}
\address{Center for Pure Mathematics, \\School of Mathematical Sciences,\\ Anhui University, Hefei, P.R. China}
\email{cding@ahu.edu.cn}

%


%

\subjclass{30G35, 30G20, 35A22}
\keywords{Rarita-Schwinger operator, integral transform, higher spin $\Pi$-operator, higher spin Beltrami equation}
\date{\today}
\begin{abstract}
Rarita-Schwinger fields are solutions to the relativistic field equation of spin-$3/2$ fermions in four dimensional flat spacetime, which are important in supergravity and superstring theories. Bure\v s et al. generalized it to arbitrary spin $k/2$ in 2002 in the context of Clifford algebras. In this article, we introduce a higher spin $\Pi$-operator related to the Rarita-Schwinger operator. Further, we investigate norm estimates, mapping properties and the adjoint operator of the higher spin $\Pi$-operator. As an application, a higher spin Beltrami equation is introduced, and existence and uniqueness of solutions to this higher spin Beltrami equation is established by the norm estimate of the higher spin $\Pi$-operator. 
\end{abstract}
\label{page:firstblob}
\maketitle
\section{Introduction}~\par
It is well-known that complex analysis is closely related to the theory of partial differential equations, and with functional analytical methods many results of classical function theory can be applied for solving partial differential equations. In particular, the technique of transforming partial differential equations into some integral equations is one of the most commonly used methods. The most important integral transforms in this part of complex analysis are known as the Teodorescu transform, the $\Pi$-operator, and the Beltrami equation. The Beltrami equation is a generalization of Cauchy-Riemann's equation, which has a wide applications in the fields of hydrodynamics \cite{hydro}, electrodynamics \cite{electro} and modern control theory \cite{control}. Formally, the Beltrami equation in the complex plane is given by
\begin{align*}
\frac{\partial f}{\partial \overline{z}}(z)=\mu(z)\frac{\partial f}{\partial {z}}(z),
\end{align*}
where $\mu(z)$ is a measurable function, which is usually called the complex characteristic of $f$. The problem of existence and uniqueness of homogeneous solutions of the Beltrami equation ha been one of the hot topics considered by many mathematicians. 
The first systematic study on this problem in the complex plane was given by Vekua in 1962 \cite{Vekua}. For recent developments in the theory of Beltrami equations, the readers are referred to the monograph written by Iwaniec and Martin \cite{iwaniec}, the book written by Gutlyanskii et al.  \cite{gut} and references therein.
\par
In the last decades, the Teodorescu transform and the $\Pi$-operator has been successfully generalized to higher dimensional Euclidean spaces in the framework of Clifford analysis. One of the first generalizations can be found in \cite{Sp1}. In \cite{Gur1,Gur2} G\"urlebeck and Spr\"o\ss ig developed an operator calculus in real Clifford algebras with special emphasis on Teodorescu transforms and their applications in solving certain boundary value problems. Spr\"o\ss ig \cite{Sp2} studied some elliptic boundary value problems with the Teodorescu transform. One of the important applications of the Teodorescu transform is to give the existence of solutions to the Beltrami equation, and there are many papers contributed to this topic. For instance, K\"ahler \cite{Kahler} generalized the Beltrami equation in cases of quaternions. G\"urlebeck and K\"ahler \cite{GK} investigated a hypercomplex generalization of the complex $\Pi$-operator and the solution of a hypercomplex Beltrami equation. More related work can be found, for instance, in \cite{BRAK,CK,GKS}.
\par
The higher spin theory in the context of Clifford analysis (also known as higher spin Clifford analysis) was firstly introduced by Bure\v s et al. \cite{Bures} in 2002. In \cite{Bures}, the authors introduced the generalized Rarita-Schwinger operator, which acts on functions taking values in $k$-homogeneous monogenic (null solutions to the Dirac operator) polynomials. Further, the Rarita-Schwinger operator can also be considered as a Stein-Weiss gradient as presented in \cite{DWR}. It turns out the Rarita-Schwinger operator is the analog of the Dirac operator in the higher spin cases. Recently, in \cite{DingT}, the Teodorescu transform and its mapping properties in the theory of higher spins have been studied. Therefore, it is natural to define a higher spin $\Pi$-operator and a higher spin Beltrami equation with the Rarita-Schwinger operator and the Teodorescu transform, which are the main topics studied in this article.
\par
This article is organized as follows. In Section 2, we introduce some definitions and notations in Clifford analysis and the higher spin Clifford analysis. Further, some Rarita-Schwinger type operators and function spaces in the higher spin cases are also reviewed here. Section 3 is devoted to a review of some integral formulas and properties of the Rarita-Schwinger operators. In Section 4, we firstly define the higher spin $\Pi$-operator, and an integral representation of the higher spin $\Pi$-operator is given, which leads to a norm estimate of the higher spin $\Pi$-operator. Furthermore, some mapping properties of the higher spin $\Pi$-operator are given here as well. At the end, we introduce a higher spin Beltrami equation, and existence and uniqueness of solutions to the higher spin Beltrami equation have been established by the norm estimate of the higher spin $\Pi$-operator as an application.
\section{Definitions and notations}
In this section, we review some definitions and preliminary results on Clifford analysis and higher spin Clifford analysis, for more details, we refer the readers to \cite{2,10,21}.
\subsection{Clifford algebras}
Let $\R^{m-1}$ be the $(m-1)$-dimensional Euclidean space with a standard orthonormal basis $\{{\be}_1,\ldots,{\be}_{m-1}\}$. The real Clifford algebra $\Clm$ is generated by $\Rm$ with the following relationship
$${\be}_i{\be}_j+{\be}_j{\be}_i=-2\delta_{ij}\be_0,$$
where $\delta_{ij}$ is the Kronecker delta function, and $\be_0=1$ is the identity element. Hence a Clifford number $x\in \Clm$ can be written as $x=\sum_{A}x_Ae_A$ with real coefficients and $A\subset\{1,\ldots,m-1\}$. This suggests that one can consider $\Clm$ as a vector space with dimension $2^{m-1}$. Therefore, a reasonable norm for a Clifford number $x=\sum_{A}x_Ae_A$ should be $|x|=(\sum_{A}x_A^2)^{\frac{1}{2}}$. If we denote $\Clm^{k}=\{x\in \Clm:\ x=\sum_{|A|=k}x_Ae_A\}$, where $|A|$ stands for the cardinality of the set $A$, then one can see that $\Clm=\displaystyle\bigoplus_{k=0}^{m-1} \Clm^k$. For an arbitrary Clifford number ${x} =\sum_{\lvert A\rvert=k}x_{A}{e}_{A}$, we define the Clifford conjugation of ${x}$ by
$$\overline{{x}}=\sum_{A}(-1)^{\frac{\lvert A\rvert(\lvert A \rvert+1)}{2}}x_{A}{e}_{A}. $$
In this article, the $m$-dimensional Euclidean space $\R^{m}$ is identified with $ \Clm^0\oplus\Clm^1$ as following
\begin{align*}
\R^{m}&\longrightarrow  \Clm^0\oplus\Clm^1,\\
(x_0,\ldots,x_{m-1})&\longmapsto \bx=:x_0e_0+x_1e_1+\cdots+x_{m-1}e_{m-1},
\end{align*}
and we usually call $\bx$ a paravector. Given $\bx,\boy\in\R^m$, it is easy to check that $\bx\overline{\boy}=\langle \bx,\boy\rangle-\bx\wedge\overline{\boy}$ and $\bx\wedge\overline{\boy}=-\boy\wedge\overline{\bx}$,
where 
\begin{align*}
\bx\wedge\overline{\boy}=\sum_{0\leq i<j\leq m-1}e_ie_j(x_iy_j-x_jy_i),
\end{align*}
which gives us the following
\begin{align*}
\bx\overline{\boy}+\boy\overline{\bx}=2\langle \bx,\boy\rangle,\ 
\bx\overline{\boy}-\boy\overline{\bx}=-2\bx\wedge\overline{\boy}.
\end{align*}
Given $\bo{a}\in\Sm$ and $\bx\in\R^m$, we have
\begin{align*}
\bo{a}\bx\bo{a}=\bo{a}(-\bar{\bo{a}}\bar{\bx}+2\langle\bo{a},\overline{\bx}\rangle)=-\overline{\bx}+2\langle\bo{a},\overline{\bx}\rangle\bo{a},
\end{align*}
hence, $\bo{a}\bx\bo{a}$ is a reflection of $-\overline{\bx}$ with respect to the hyperplane that is orthogonal to $\bo{a}$.
\par
The generalized Cauchy-Riemann operator in $\R^m$ is given by 
\begin{align*}
\overline{\partial}_{\bx}:=\partial_{x_0}+\sum_{j=1}^{m-1}\be_j\partial_{x_j},
\end{align*}
and $D_{\bx}=\sum_{j=1}^{m-1}\be_j\partial_{x_j}$ is usually called the Dirac operator in $R^{m-1}$, where $\partial_{x_j}$ stands for the partial derivative with respect to $x_j$. The generalized conjugate Cauchy-Riemann operator is denoted by
\begin{align*}
{\partial}_{\bx}:=\partial_{x_0}-\sum_{j=1}^{m-1}\be_j\partial_{x_i}.
\end{align*}

It is easy to  verify that $\overline{\partial}_{\bx}{\partial}_{\bx}={\partial}_{\bx}\overline{\partial}_{\bx}=\Delta_{\bx}$, where $\Delta_x$ is the Laplacian in $\mathbb{R}^m$. Therefore, we usually say Clifford analysis is a refinement of harmonic analysis.
\begin{defn}
 A $\Clm$-valued function $f(x)$ defined on a domain $U$ in $\Rm$ is \emph{left monogenic} if $\overline{\partial}_{\bx}f(\bx)=0$. 
 \end{defn}
 Since Clifford multiplication is not commutative in general, there is a similar definition for right monogenic functions. Sometimes, we will consider the generalized Cauchy-Riemann operator $\overline{\partial}_{\bu}$ in a vector $\bu$ rather than $\bx$. In this article, we use monogenic to represent left monogenic unless specified.
\subsection{Rarita-Schwinger operators}
In theoretical physics, the Rarita–Schwinger equation is the relativistic field equation of spin-$3/2$ fermions in a four-dimensional flat spacetime. It is similar to the Dirac equation for spin-$1/2$ fermions. This equation was first introduced by William Rarita and Julian Schwinger in 1941 \cite{Rarita}. In 2002, Bure\v{s} et al. \cite{Bures} generalized the Rarita-Schwinger operator from spin -$3/2$ to a general spin-$k/2$ in the framework of Clifford analysis. Now, we review the definition and some properties of the Rarita-Schwinger operator as follows. For more details, see \cite{Bures,DJR}.
\par
Let $\mathcal{P}_k(\bu)$ be the space of $\mathcal{C}l_m$-valued polynomials homogeneous of degree $k$ in the variable $\bu$. We denote the following three important spaces of certain homogeneous polynomials.
\begin{align*}
&\mathcal{M}^{+}_k(\bu)=\mathcal{P}_k(\bu)\cap\ker\overline{\partial}_{\bu},\  \mathcal{M}^{-}_k(\bu)=\mathcal{P}_k(\bu)\cap\ker{\partial}_{\bu},\\ 
&\Hk(\bu)=\mathcal{P}_k(\bu)\cap\ker\Delta_{\bu}.
\end{align*}
 Then, we have a Fischer decomposition for $\Hk(\bu)$ as follows. Indeed, there is also a similar Fischer decomposition of $\Hk$ with respect to the Dirac operator, see \cite{DJR}.
\begin{proposition}[Fischer decomposition]\label{fischer}~\\
The space of $\Clm$-valued $k-$homog-eneous harmonic polynomials $\Hk(\bu)$ can be decomposed as
\begin{align*}
&\Hk(\bu)=\Mk^+(\bu)\oplus\overline{\bu}\Mkk^-(\bu),\\
&\Hk(\bu)=\Mk^-(\bu)\oplus{\bu}\Mkk^+(\bu).
\end{align*}
\end{proposition}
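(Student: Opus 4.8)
The plan is to reduce everything to one computation: the \emph{twisted Leibniz identity}
$\ovp_{\bu}(\ov{\bu}\,Q) = (m+2k-2)\,Q - \bu\,\partial_{\bu}Q$, valid for every $(k-1)$-homogeneous $\Clm$-valued polynomial $Q$; in particular, for $Q\in\Mkk^-(\bu)$ the last term drops and $\ovp_{\bu}(\ov{\bu}\,Q) = (m+2k-2)\,Q$. (We may assume $k\ge 1$, the case $k=0$ being trivial.) To prove this identity I would expand $\ovp_{\bu} = \partial_{u_0} + \sum_{j=1}^{m-1}\be_j\partial_{u_j}$ against $\ov{\bu}\,Q$ by the noncommutative Leibniz rule. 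The only algebraic input is the paravector relation $\be_j\ov{\bu} = 2u_j + \bu\,\be_j$, which is the case $\boy = \be_j$ of the rule $\bx\ov{\boy} + \boy\ov{\bx} = 2\langle\bx,\boy\rangle$ recorded above; using it together with $\be_j^2 = -1$ and the Euler relation $\sum_{i=0}^{m-1}u_i\partial_{u_i}Q = (k-1)Q$, the terms coming from differentiating $\ov{\bu}$ reorganize into a scalar multiple of $Q$ plus $\bu\,\partial_{\bu}Q$, leaving the stated formula. The constant $m+2k-2$ is $\ge 1$ for all admissible $m,k$, hence nonzero; this nonvanishing is the one fact the whole scheme hinges on, and the bookkeeping in the noncommutative Leibniz expansion is the only real labor in the proof.

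Granting the identity, the first decomposition is formal. First, $\Mk^+(\bu)\subseteq\Hk(\bu)$, since $\Delta_{\bu} = \partial_{\bu}\ovp_{\bu}$ annihilates $\ker\ovp_{\bu}$; and for $Q\in\Mkk^-(\bu)$ one has $\ov{\bu}Q\in\Hk(\bu)$ because $\Delta_{\bu}(\ov{\bu}Q) = \partial_{\bu}\ovp_{\bu}(\ov{\bu}Q) = (m+2k-2)\,\partial_{\bu}Q = 0$. Thus $\Mk^+(\bu) + \ov{\bu}\Mkk^-(\bu)\subseteq\Hk(\bu)$. Conversely, if $P\in\Hk(\bu)$ then $\partial_{\bu}\ovp_{\bu}P = \Delta_{\bu}P = 0$, so $\ovp_{\bu}P\in\Mkk^-(\bu)$; put $Q := (m+2k-2)^{-1}\ovp_{\bu}P$ and $P_0 := P - \ov{\bu}Q$. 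The identity gives $\ovp_{\bu}P_0 = \ovp_{\bu}P - (m+2k-2)Q = 0$, and $P_0$ is $k$-homogeneous, so $P_0\in\mathcal P_k(\bu)\cap\ker\ovp_{\bu} = \Mk^+(\bu)$ and $P = P_0 + \ov{\bu}Q$. Directness is immediate: from $P_0 + \ov{\bu}Q = 0$ with $P_0\in\Mk^+(\bu)$, $Q\in\Mkk^-(\bu)$, applying $\ovp_{\bu}$ yields $(m+2k-2)Q = 0$, hence $Q = 0$ and then $P_0 = 0$.

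For the second decomposition I would avoid a fresh computation by using the reflection $\sigma$ sending $(u_0,u_1,\dots,u_{m-1})$ to $(u_0,-u_1,\dots,-u_{m-1})$: it is left $\Clm$-linear, commutes with $\Delta_{\bu}$, carries $\ovp_{\bu}$ to $\partial_{\bu}$ and $\bu$ to $\ov{\bu}$, hence preserves $\Hk(\bu)$ and interchanges $\Mk^+(\bu)$ with $\Mk^-(\bu)$ (and $\Mkk^-(\bu)$ with $\Mkk^+(\bu)$); applying $\sigma$ to the first decomposition produces $\Hk(\bu) = \Mk^-(\bu)\oplus\bu\,\Mkk^+(\bu)$. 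Equivalently, one reruns the Leibniz computation with $\partial_{\bu},\ovp_{\bu}$ and $\bu,\ov{\bu}$ swapped, now using $\be_j\bu = \ov{\bu}\be_j - 2u_j$. In summary, the only genuine content is the twisted Leibniz identity, and that is where I expect essentially all the work to be; the passage to the two decompositions is then pure linear algebra.
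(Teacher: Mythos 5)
Your proof is correct and follows essentially the same route as the paper: the whole argument rests on the operator identity $\ovp_{\bu}(\ov{\bu}Q) = (m+2\mathbb{E}_{\bu}-\bu\partial_{\bu})Q$, specialized to a $(k-1)$-homogeneous $Q\in\Mkk^-(\bu)$ so that it collapses to $(m+2k-2)Q$, which both you and the paper then use to build the explicit decomposition $h_k = p_k + \ov{\bu}p_{k-1}$ with $p_{k-1}=(m+2k-2)^{-1}\ovp_{\bu}h_k$ and to deduce directness. Your reflection $\sigma$ for the second decomposition is a slightly slicker packaging of what the paper dismisses with ``can be verified similarly,'' but the substance is the same.
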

\begin{proof}
Here, we only prove the first identity, and the second one can be verified similarly. Let $h_k\in\Hk(\bu)$, then, it is easy to see that $\overline{\partial}_{\bu}h_k(\bu)\in\mathcal{M}_{k-1}^-(\bu)$. Let $p_{k-1}\in\Mkk^-(\bu)$,  a straightforward  calculation shows us that
  \begin{align*}
  \overline{\partial}_{\bu}\overline{\bu} p_{k-1}(\bu)=&(m+2\mathbb{E}_{\bu}-\bu\partial_{\bu})p_{k-1}(\bu)=(m+2\mathbb{E}_{\bu})p_{k-1}(\bu)\\
  =&(m+2k-2)p_{k-1}(\bu).
 \end{align*} 
 Now, we let
 \begin{align*}
 p_{k-1}(\bu)=(m+2k-2)^{-1}\overline{\partial}_{\bu}h_k(\bu),\ p_k(\bu)=h_k(\bu)-\overline{\bu}p_{k-1}(\bu).
 \end{align*}
 One can easily see that $\overline{\partial}_{\bu}p_k(\bu)=0$, and $p_k(\bu)\in\Mk^+(\bu)$. Further, $$\overline{\partial}_{\bu}\overline{\bu} p_{k-1}(\bu)=(m+2k-2)p_{k-1}(\bu)$$ also justifies the fact that $\Mk^+(\bu)\cap\overline{\bu}\Mkk^-(\bu)=\emptyset$, which completes the proof.
\end{proof}
The Fischer decomposition above gives us the following two projections. 
\begin{align*}
&P_k^+: \mathcal{H}_k(\bu)\longrightarrow \mathcal{M}^+_k(\bu),\ P_k^-: \mathcal{H}_k(\bu)\longrightarrow \mathcal{M}^-_k(\bu)\\
&P_k^+=1-\frac{\overline{\bu}\overline{\partial}_{\bu}}{m+2k-2},\ P_k^-=1-\frac{{\bu}{\partial}_{\bu}}{m+2k-2}.
\end{align*}
Suppose $\Omega$ is a domain in $\mathbb{R}^m$, we consider a differentiable function $f: \Omega\times \mathbb{R}^m\longrightarrow \Clm$
such that, for each $\bx\in \Omega$, $f(\bx,\bu)$ is a left monogenic polynomial homogeneous of degree $k$ in $\bu$. Then, the first order conformally invariant differential operator in higher spin theory, named as the \emph{Rarita-Schwinger operator} \cite{Bures,DJR}, is defined by 
\begin{align*}
&R_k:\ C^1(\Omega\times\Bm,\Mk^+(\bu))\longrightarrow C(\Omega\times\Bm,\Mk^+(\bu)),\\
&R_kf(\bx,\bu):=P_k^+\overline{\partial}_{\bx}f(\bx,\bu)=\bigg(1+\frac{\overline{\bu} \overline{\partial}_{\bu}}{m+2k-2}\bigg)\ovp_{\bx}f(\bx,\bu).
\end{align*}
We also define 
\begin{align*}
&R_k^{\dagger}:\ C^1(\Omega\times\Bm,\Mk^-(\bu))\longrightarrow C(\Omega\times\Bm,\Mk^-(\bu)),\\
&R_k^{\dagger}g(\bx,\bu):=P_k^-{\partial}_{\bx}g(\bx,\bu)=\bigg(1+\frac{{\bu} {\partial}_{\bu}}{m+2k-2}\bigg)\partial_{\bx}g(\bx,\bu).
\end{align*}
The identities of $\text{End}(\Mk^+(\bu))$ and $\text{End}(\Mk^-(\bu))$ can be represented by the reproducing kernel $Z_k^+(\bu,\bov)$ and $Z_k^-(\bu,\bov)$, respectively, for the inner spherical monogenics of degree $k$. This so called zonal spherical monogenic satisfies
\begin{eqnarray*}
f(\bu)=\int\displaylimits_{\Sm}{Z_k^{\pm}(\bu,\bov)}f(\bov)dS(\bov),\ \text{for\ all}\ f\in\Mk^{\pm}(\bu).
\end{eqnarray*}
Thanks to the result \cite[Theorem 3.2]{DeBieK}, one can apply a similar argument to obtain that 
\begin{align*}
&Z_k^+(\bu,\bov)=\frac{2\mu+k}{2\mu}|\bu|^k|\bov|^kC_k^{\mu}(t)+\bu\wedge\overline{\bov}|\bu|^{k-1}|\bov|^{k-1}C_{k-1}^{\mu+1}(t),\\
&Z_k^-(\bu,\bov)=\frac{2\mu+k}{2\mu}|\bu|^k|\bov|^kC_k^{\mu}(t)+\overline{\bu}\wedge\bov|\bu|^{k-1}|\bov|^{k-1}C_{k-1}^{\mu+1}(t),
\end{align*}
where $\mu=\frac{m-2}{2}$, $t=\frac{\langle \bu,\bov\rangle}{|\bu||\bov|}$ and $C_k^{\mu}(t)$ is the Gegenbauer polynomial of degree $k$ and order $\mu$ given by
\begin{align}\label{Gegenbauer}
C_k^{\mu}(t)=\sum_{n=0}^{[k/2]}(-1)^n
\frac{\Gamma(k-n+\mu)}{\Gamma(\mu)n!(k-2n)!}(2t)^{k-2n},
\end{align}
which satisfies $\frac{d}{dt}C_k^{\mu}(t)=2\mu C_{k-1}^{\mu+1}(t)$. We also observe that $Z_k^+(\bu,\bov)=Z_k^-(\overline{\bu},\overline{\bov})$, and
 \begin{align}\label{kernelconjugate}
 \overline{Z_k^+(\bu,\bov)}=&\frac{2\mu+k}{2\mu}|\bu|^k|\bov|^kC_k^{\mu}(t)+\overline{\bu\wedge\overline{\bov}}|\bu|^{k-1}|\bov|^{k-1}C_{k-1}^{\mu+1}(t)\nonumber\\
 =&\frac{2\mu+k}{2\mu}|\bu|^k|\bov|^kC_k^{\mu}(t)-\bu\wedge\overline{\bov}|\bu|^{k-1}|\bov|^{k-1}C_{k-1}^{\mu+1}(t)\nonumber\\
 =&\frac{2\mu+k}{2\mu}|\bu|^k|\bov|^kC_k^{\mu}(t)+\bov\wedge\overline{\bu}|\bu|^{k-1}|\bov|^{k-1}C_{k-1}^{\mu+1}(t)\nonumber\\
 =&Z_k^+(\bov,\bu).
 \end{align}
The fundamental solution for $R_k$ and $R_k^{\dagger}$ are given by
\begin{align*}
&E_{k}(\bx,\bu,\bov)=\frac{1}{c_{m,k}}\frac{\overline{\bx}}{|\bx|^m}Z_k^+\bigg(\frac{\bx\bu\bx}{|\bx|^2},\bov\bigg),\\ 
&E^{\dagger}_{k}(\bx,\bu,\bov)=\frac{1}{c_{m,k}}\frac{{\bx}}{|\bx|^m}Z_k^-\bigg(\frac{\overline{\bx}\bu\overline{\bx}}{|\bx|^2},\bov\bigg),
\end{align*}
respectively, where $c_{m,k}=\displaystyle\frac{(m-2)\omega_{m-1}}{m+2k-2}$ and $\omega_{m-1}$ is the area of $(m-1)$-dimensional unit sphere. Given $\bo{a}\in\Sm$, we notice that
\begin{align*}
(\bu\wedge\overline{\bo{a}\bov\bo{a}})\bola=\frac{\bu\bar{\bola}\bar{\bov}\bar{\bola}-\bola\bov\bola\overline{\bu}}{2}\bola=\bola\frac{\bar{\bola}\bu\bar{\bola}\overline{\bov}-\bov\bola\overline{\bu}\bola}{2}=\bola(\bar{\bola}\bu\bar{\bola})\wedge\overline{\bov},
\end{align*}
which gives us a crucial identity as the following
\begin{align*}
Z_k^{\pm}(\bu,\bo{a}\bov\bo{a})\bola=\bola Z_k^{\pm}(\overline{\bola}\bu\overline{\bola},\bov).
\end{align*}
\subsection{Function spaces in higher spin Clifford analysis}
In this subsection, we introduce generalizations of some classical function spaces in higher spin Clifford analysis. Let $\Omega\subset\R^m$ be a domain, the norm of the space $L^p(\Omega\times \Bm,\Clm)$ with $1\leq p<\infty$ is given by
\begin{align*}
	||f||_{L^p}:=\bigg(\int_{\Omega}\int_{\Bm}|f(\bx,\bu)|^pdS(\bu)d\bx\bigg)^{\frac{1}{p}},
\end{align*}
and when $p=\infty$, the function space is equipped with the essential norm. We introduce the Sobolev space $W^{s,t}_p(\Omega\times \Bm,\Clm)$ as the subset of functions $f$ in $L^p(\Omega\times \Bm,\Clm)$ such that $f$ and its derivatives up to order-$(s,t)$ with respect to $(\bx,\bu)$ have a finite $L^p$ norm. Let $\bo{\alpha}=(\alpha_1,\ldots,\alpha_m),\bo{\beta}=(\beta_1,\ldots,\beta_m)$ be multi-indices, $|\bo{\alpha}|=\sum_{j=1}^m\alpha_j,\ |\bo{\beta}|=\sum_{j=1}^m\beta_j$, and $\partial_{\bx}^{\bo{\alpha}}:=(\partial^{\alpha_1}_{x_1},\ldots,\partial_{x_m}^{\alpha_m})$, $\partial_{\bu}^{\bo{\beta}}:=(\partial^{\beta_1}_{u_1},\ldots,\partial_{u_m}^{\beta_m})$. The norm for $W^{s,t}_p(\Omega\times \Bm,\Clm)$ with $p>1$ is given by
\begin{align*}
||f||_{W^{s,t}_p(\Omega\times \Bm,\Clm)}:=\bigg[\sum_{|\bo{\alpha}|=0}^s\sum_{|\bo{\beta}|=0}^t||\partial_{\bx}^{\bo{\alpha}}\partial_{\bu}^{\bo{\beta}}f||^p_{L^p(\Omega\times\Bm,\Clm)}\bigg]^{\frac{1}{p}}.
\end{align*}
Now, we say that a function $f(\bx,\bu)\in L^p(\Omega\times \Bm,\Mk^+(\bu))$ if for each fixed $\bx\in\Omega$, $f(\bx,\bu)\in\Mk^+(\bu)$ with respect to $\bu$ and the $L^p$ norm of $f$ is finite. For convenience, we modify the norm of $L^p(\Omega\times \Bm,\Mk^+(\bu))$ to
\begin{align*}
	||f||_{L^p(\Omega\times\Bm,\Mk^+(\bu))}:=\bigg(\int_{\Omega}\int_{\Sm}|f(\bx,\bu)|^pdS(\bu)d\bx\bigg)^{\frac{1}{p}},
\end{align*}
which is equivalent to the norm $\|\cdot\|_{L^p}$ up to a multiplicative constant due to the homogeneity  of the second variable $\bu$.
\par
In later sections, for Sobolev spaces $W^{s,t}_p(\Omega\times \Bm,\Mk^+(\bu))$, since it is a space of $k$-homogeneous polynomials in the variable $\bu$, we can omit the regularity with respect to $\bu$ to $W^{s}_p(\Omega\times \Bm,\Mk^+(\bu))$ instead. Similar as \cite[Theorem 3.1]{DingT}, we notice that $\Mk^+(\bu)$ is a finite dimensional space, which implies that a Cauchy sequence in $L^p(\Omega\times \Bm,\Mk^{\pm}(\bu))$ is equivalent to finitely many Cauchy sequence in $L^p(\Omega,\Clm)$.  Therefore, we can see that
\begin{proposition}
	The space of $L^p(\Omega\times \Bm,\Mk^{\pm}(\bu))$ is a closed subspace of $L^p(\Omega\times \Bm,\Clm)$.
	\end{proposition}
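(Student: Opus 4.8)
The plan is to deduce closedness from the elementary fact that a finite-dimensional subspace of a normed space is always closed, after slicing in the $\bx$-variable. That $L^p(\Omega\times\Bm,\Mk^{\pm}(\bu))$ is a linear subset of $L^p(\Omega\times\Bm,\Clm)$ is immediate from the fact that $\Mk^{\pm}(\bu)$ is a vector space, so the only point to check is closedness.

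First I would take a sequence $(f_n)_n$ in $L^p(\Omega\times\Bm,\Mk^{\pm}(\bu))$ converging to some $f$ in $L^p(\Omega\times\Bm,\Clm)$, and aim to show that for almost every $\bx\in\Omega$ the slice $f(\bx,\cdot)$ lies in $\Mk^{\pm}(\bu)$ (which, since elements of $L^p$ are equivalence classes, is exactly what membership in $L^p(\Omega\times\Bm,\Mk^{\pm}(\bu))$ requires, after adjusting $f$ on a null set). By Fubini's theorem the nonnegative functions $\bx\mapsto\|f_n(\bx,\cdot)-f(\bx,\cdot)\|_{L^p(\Bm,\Clm)}^p$ converge to $0$ in $L^1(\Omega)$, so some subsequence $(f_{n_j})_j$ has $\|f_{n_j}(\bx,\cdot)-f(\bx,\cdot)\|_{L^p(\Bm,\Clm)}\to 0$ for every $\bx$ outside a null set $\mathcal{N}\subset\Omega$; that is, $f_{n_j}(\bx,\cdot)\to f(\bx,\cdot)$ in $L^p(\Bm,\Clm)$ for each $\bx\in\Omega\setminus\mathcal{N}$.

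Next comes the structural input. The space $\Mk^{\pm}(\bu)$ is a linear subspace of $\mathcal{P}_k(\bu)$, the space of $\Clm$-valued polynomials homogeneous of degree $k$ in $\bu$, which is finite-dimensional; moreover the restriction map $p(\bu)\mapsto p|_{\Bm}$ is injective, since a polynomial vanishing on the open ball $\Bm$ vanishes identically. Hence the image $V$ of $\Mk^{\pm}(\bu)$ inside $L^p(\Bm,\Clm)$ is a finite-dimensional, and therefore closed, subspace. For each $\bx\in\Omega\setminus\mathcal{N}$ the terms $f_{n_j}(\bx,\cdot)$ all lie in $V$ and converge in $L^p(\Bm,\Clm)$ to $f(\bx,\cdot)$, so $f(\bx,\cdot)\in V$, i.e. $f(\bx,\cdot)\in\Mk^{\pm}(\bu)$. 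Since $f\in L^p(\Omega\times\Bm,\Clm)$ to begin with, this yields $f\in L^p(\Omega\times\Bm,\Mk^{\pm}(\bu))$, which proves the subspace is closed; the equivalence of the two norms noted in the excerpt then transfers the statement to the modified norm as well.

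I do not expect a serious obstacle here; the only step needing a little care is the Fubini/measurability point that legitimizes passing from $L^p$-convergence on $\Omega\times\Bm$ to a.e.-$\bx$ convergence of the slices in $L^p(\Bm,\Clm)$. One could alternatively try to realize $L^p(\Omega\times\Bm,\Mk^{\pm}(\bu))$ as the kernel of a bounded idempotent built from the reproducing kernel $Z_k^{\pm}(\bu,\bov)$ acting in the $\bu$-variable, but establishing $L^p$-boundedness of such a projection on non-polynomial slices is itself a trace-type issue, so the finite-dimensional argument above is the cleaner route.
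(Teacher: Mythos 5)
Your proof is correct. The paper does not print its own argument for this proposition (it defers to a reference), but the route you take --- Fubini slicing to a.e.\ convergence of the $\bu$-slices in $L^p(\Bm,\Clm)$, combined with closedness of the finite-dimensional image $V$ of $\Mk^{\pm}(\bu)$ under the injective restriction map --- is the natural one and works. The only detail you leave implicit, the joint measurability of the ``polynomialized'' representative $\tilde f$, is standard: fix a basis $\{q_1,\dots,q_d\}$ of $\Mk^{\pm}(\bu)$, observe that the coefficient maps $c_i$ are continuous linear functionals on $V\subset L^p(\Bm,\Clm)$ applied to the strongly measurable map $\bx\mapsto f(\bx,\cdot)$ (strong measurability itself coming from Fubini), hence the $c_i$ are measurable in $\bx$, so $(\bx,\bu)\mapsto\sum_i c_i(\bx)q_i(\bu)$ is jointly measurable and represents the same $L^p$ class as $f$. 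Your closing remark about the equivalence of the ball and sphere norms is accurate, though it is not needed for closedness in $L^p(\Omega\times\Bm,\Clm)$; it only becomes relevant when the subspace is equipped with the modified sphere norm as in the paper.
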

\begin{proof}
	Let $\{f_n\}_{n=1}^{\infty}$ be a Cauchy sequence in $L^p(\Omega\times \Bm,\Mk^{\pm}(\bu))$, which converges to $f\in L^p(\Omega\times \Bm,\Clm)$. This means that 
	\begin{align*}
		\lim_{n\rightarrow\infty}\int_{\Omega}\int_{\Sm}|f_n(\bx,\bu)-f(\bx,\bu)|^pdS(\bu)d\bx=0,
	\end{align*}
	and this gives rise to that for almost every $(\bx,\bu)\in\Omega\times \Bm$, we have $$\lim_{n\rightarrow\infty}f_n(\bx,\bu)=f(\bx,\bu).$$
	Further, since for each fixed $\bx\in\Omega$, $f_n(\bx,\bu)\in \Mk^{\pm}(\bu)$ and $\Mk^{\pm}(\bu)$ is a finite dimensional vector space, let $\{\varphi_j(\bu)\}_{j=1}^s$ be an orthonormal basis of $\Mk^{\pm}(\bu)$. Then, for each fixed $\bx\in\Omega$, we can rewrite 
	\begin{align*}
	f_n(\bx,\bu)=\sum_{j=1}^s\varphi_{j}(\bu)a_{n,j}(\bx),
	\end{align*}
	where $a_{n,j}(\bx)$ are all real-valued numbers depending on $\bx$. Apparently, the convergence of $\{f_n(\bx,\bu)\}_{n=1}^{\infty}$ is equivalent to the convergence of the sequence $\{a_{n,j}(\bx)\}_{n=1}^{\infty}$ for all $j=1,2,\cdots,s$, we assume that $\displaystyle\lim_{n\rightarrow\infty}a_{n,j}(\bx)=a_j(\bx)$ for $j=1,2,\cdots,s$. Therefore, we must have 
		\begin{align*}
	f(\bx,\bu)=\sum_{j=1}^s\varphi_{j}(\bu)a_{j}(\bx),
	\end{align*}
	which implies that $f\in\Mk^{\pm}(\bu)$, i.e., $f\in L^p(\Omega\times \Bm,\Mk^{\pm}(\bu))$.
\end{proof}
\section{Integral formulas for Rarita-Schwinger operators}
In this section, we introduce some needed integral formulas for $R_k$ and $R_k^{\dagger}$, which can be obtained with similar arguments for the Rarita-Schwinger operators given in \cite{Bures,DJR}.
\begin{defn}
For any $\Clm$-valued polynomials $P(\bu),Q(\bu)$, the inner product
$(P(\bu), Q(\bu))_{\bu}$ with respect to $\bu$ is given by
\begin{align*}
(P(\bu), Q(\bu))_{\bu} =\int_{\Sm}P(\bu)Q(\bu)dS(\bu),
\end{align*}
where $dS(\bu)$ is the area element on the unit sphere $\Sm$.
\end{defn}
Firstly, we introduce a Stokes' Theorem for $\overline{\partial}_{\bx}$ and $\partial_{\bx}$ as follows.
\begin{theorem}[Stokes' Theorem for the Dirac operator]\cite[Theorem A.2.22]{23}
Let $f,g\in C^1(\Omega\times\Bm,\Mk^{\pm}(\bu))\cap C(\overline{\Omega}\times\Bm,\Mk^{\pm}(\bu))$. Then, we have
\begin{align*}
&\int_{\partial\Omega}g(\bx, \bu)d\sigma_{\bx}f(\bx, \bu)\\
=&\int_{\Omega}\big(g(\bx, \bu)\overline{\partial}_{\bx}\big) f(\bx, \bu)d\bx+\int_{\Omega}g(\bx, \bu)\big( \overline{\partial}_{\bx}f(\bx, \bu)\big)d\bx,\\
&\int_{\partial\Omega}g(\bx, \bu)\overline{d\sigma_{\bx}}f(\bx, \bu)\\
=&\int_{\Omega}\left(g(\bx, \bu)\partial_{\bx}\right) f(\bx, \bu)d\bx+\int_{\Omega}g(\bx, \bu)\left(\partial_{\bx}f(\bx, \bu)\right)d\bx,
\end{align*}
where $d\sigma_{\bx}=n(\bx)d\sigma(\bx)$, $n(\bx)$ is the outward unit normal vector on $\partial\Omega$ and $d\sigma(\bx)$ is the area element on $\partial\Omega$.
\end{theorem}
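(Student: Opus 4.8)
The plan is to reduce everything to the classical (real, scalar) divergence theorem applied coordinate‑wise, with the variable $\bu$ playing the role of an inert parameter. Fix $\bu\in\Bm$. Then $\bx\mapsto f(\bx,\bu)$ and $\bx\mapsto g(\bx,\bu)$ are $\Clm$‑valued functions on $\Omega$, of class $C^1$ on $\Omega$ and continuous on $\overline{\Omega}$, so it suffices to prove the two identities for arbitrary such $\Clm$‑valued functions and then integrate the resulting pointwise‑in‑$\bu$ equalities over $\Bm$. In particular, the fact that the values lie in $\Mk^{\pm}(\bu)$ — monogenicity and homogeneity in $\bu$ — plays no role; for $k=0$ this is just the usual Stokes' theorem of Clifford analysis, and the general case is literally the same statement carrying a spectator parameter.

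For the first identity I would write the paravector normal as $n(\bx)=\sum_{j=0}^{m-1}n_j(\bx)\be_j$ (with $\be_0=1$), so that $g\,d\sigma_{\bx}\,f=\sum_{j=0}^{m-1}\big(g\,\be_j\,f\big)\,n_j\,d\sigma$. Expanding in the basis $\{e_A\}$, each coefficient of $g\,\be_j\,f$ is an ordinary real $C^1$ function on $\Omega$, so the classical divergence theorem applied to the $\Clm$‑valued field $\bx\mapsto\big(g\be_0f,\dots,g\be_{m-1}f\big)$ gives
\begin{align*}
\int_{\partial\Omega}g(\bx,\bu)\,d\sigma_{\bx}\,f(\bx,\bu)
&=\sum_{j=0}^{m-1}\int_{\Omega}\partial_{x_j}\big(g(\bx,\bu)\be_j f(\bx,\bu)\big)\,d\bx\\
&=\sum_{j=0}^{m-1}\int_{\Omega}\Big[\big(\partial_{x_j}g\big)\be_j f+g\,\be_j\big(\partial_{x_j}f\big)\Big]\,d\bx,
\end{align*}
where in the last step I used that $\be_j$ is constant. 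Since $\sum_{j}\big(\partial_{x_j}g\big)\be_j=g\,\overline{\partial}_{\bx}$ (the operator acting from the right) and $\sum_{j}\be_j\big(\partial_{x_j}f\big)=\overline{\partial}_{\bx}f$ (acting from the left) — which is exactly the definition $\overline{\partial}_{\bx}=\partial_{x_0}+\sum_{j=1}^{m-1}\be_j\partial_{x_j}$ — the right‑hand side equals $\int_{\Omega}\big(g\overline{\partial}_{\bx}\big)f\,d\bx+\int_{\Omega}g\big(\overline{\partial}_{\bx}f\big)\,d\bx$, which is the claim. The second identity follows verbatim after replacing $\be_j$ by $\overline{\be_j}$: since $\overline{d\sigma_{\bx}}=\overline{n(\bx)}\,d\sigma(\bx)$ with $\overline{n(\bx)}=n_0-\sum_{j=1}^{m-1}n_j\be_j$, one has $g\,\overline{d\sigma_{\bx}}\,f=\sum_{j=0}^{m-1}\big(g\,\overline{\be_j}\,f\big)n_j\,d\sigma$, and the divergence theorem together with $\sum_j(\partial_{x_j}g)\overline{\be_j}=g\,\partial_{\bx}$ and $\sum_j\overline{\be_j}(\partial_{x_j}f)=\partial_{\bx}f$, which is precisely the definition of the conjugate operator $\partial_{\bx}=\partial_{x_0}-\sum_{j=1}^{m-1}\be_j\partial_{x_j}$, yields the stated formula.

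The only genuinely delicate point, and the step I expect to cost the most care, is the regularity required to legitimately invoke the classical divergence theorem: as stated, $f,g$ are merely $C^1$ on the open set $\Omega$ and continuous up to $\overline{\Omega}$, which is not quite enough for a naive application, and moreover $\partial\Omega$ must be regular enough (e.g.\ Lipschitz) for Gauss--Green to hold. The standard remedy — and presumably what the cited reference \cite{23} does — is to exhaust $\Omega$ by an increasing family of subdomains $\Omega_\varepsilon$ with smooth boundary whose closures lie in $\Omega$, on which the divergence theorem applies directly, and then pass to the limit $\varepsilon\to0$, using the continuity of $f,g$ on $\overline{\Omega}$ to control the boundary integrals and the $C^1$‑hypothesis (locally bounded derivatives, dominated convergence) to control the solid integrals. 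Everything else is purely formal bookkeeping of left versus right Clifford multiplication, so I would keep that part brief and cite \cite{23} for the limiting argument.
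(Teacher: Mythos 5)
The paper gives no proof of this statement; it is taken verbatim (up to notation) from \cite[Theorem~A.2.22]{23}, so there is no internal argument against which to compare your derivation. On its own terms your proof is correct: reducing to the classical divergence theorem applied componentwise in the $\{e_A\}$ basis, with $\bu$ frozen as a spectator parameter, is exactly the standard way one establishes the Clifford--Stokes formula, and both identities fall out of the bookkeeping of $g\,n(\bx)\,f=\sum_{j}g\,\be_j f\,n_j$ (resp.\ $g\,\overline{n(\bx)}\,f=\sum_{j}g\,\overline{\be_j} f\,n_j$) together with the product rule. Your closing caveat about the regularity mismatch — $C^1(\Omega)\cap C(\overline\Omega)$ versus what Gauss--Green actually requires — and the exhaustion-by-smooth-subdomains remedy is well taken and is indeed the kind of technical point that \cite{23} handles.

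One small inaccuracy: you propose to ``integrate the resulting pointwise-in-$\bu$ equalities over $\Bm$,'' but no $\bu$-integration appears in the statement; all three integrals are over $\partial\Omega$ or $\Omega$ only, and $\bu$ is a genuine free parameter. The correct phrasing is simply that the identity holds for each fixed $\bu\in\Bm$, which is what your fixed-$\bu$ reduction already delivers — so drop the extra integration over $\Bm$ rather than append it. (Integration over $\Sm$ in the $\bu$-variable is what produces the Rarita--Schwinger version in Theorem~\ref{Stokes}, a different statement.) Everything else is fine.
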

Now, we can establish a Stokes' Theorem for Rarita-Schwinger operators as follows. A similar result for the Rarita-Schwinger operator defined with the Dirac operator is given in \cite{Bures,DJR}.
\begin{theorem}[Rarita-Schwinger Stokes' Theorem]\label{Stokes}
Suppose that the functions $f,g\in C^1(\Omega\times\Bm,\Mk^{\pm}(\bu))\cap C(\overline{\Omega}\times\Bm,\Mk^{\pm}(\bu))$. Then, we have
\begin{align*}
&\int_{\partial\Omega}\left(g(\bx, \bu)d\sigma_{\bx}f(\bx, \bu)\right)_{\bu}\\
=&\int_{\Omega}\left(g(\bx, \bu)R_k, f(\bx, \bu)\right)_{\bu}d\bx+\int_{\Omega}\left(g(\bx, \bu), R_kf(\bx, \bu)\right)_{\bu}d\bx,\\
&\int_{\partial\Omega}\left(g(\bx, \bu)\overline{d\sigma_{\bx}}f(\bx, \bu)\right)_{\bu}\\
=&\int_{\Omega}\left(g(\bx, \bu)R_k^{\dagger}, f(\bx, \bu)\right)_{\bu}d\bx+\int_{\Omega}\left(g(\bx, \bu), R_k^{\dagger}f(\bx, \bu)\right)_{\bu}d\bx.
\end{align*}

\end{theorem}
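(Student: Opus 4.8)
The plan is to deduce the Rarita--Schwinger Stokes' Theorem from the Stokes' Theorem for the Dirac operator $\overline\partial_{\bx}$ stated just above, by integrating the latter over the inner sphere $\Sm$ in the variable $\bu$. Explicitly, I would apply the pairing $(\,\cdot\,,\,\cdot\,)_{\bu}$, i.e.\ $\int_{\Sm}(\,\cdot\,)\,dS(\bu)$, to both sides of $\int_{\partial\Omega}g\,d\sigma_{\bx}\,f = \int_{\Omega}(g\overline\partial_{\bx})f\,d\bx + \int_{\Omega}g(\overline\partial_{\bx}f)\,d\bx$. The left-hand side becomes $\int_{\partial\Omega}(g\,d\sigma_{\bx}\,f)_{\bu}$ at once, so everything comes down to showing that, under $\int_{\Sm}\,\cdot\,dS(\bu)$, the plain derivative $\overline\partial_{\bx}$ (acting on $f$ from the left and on $g$ from the right) may be replaced by $R_k=P_k^+\overline\partial_{\bx}$. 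For the $R_k^{\dagger}$-identity one starts instead from the second, conjugated Dirac Stokes identity, with $\overline{d\sigma_{\bx}}$ in place of $d\sigma_{\bx}$ and $\partial_{\bx}$ in place of $\overline\partial_{\bx}$; the argument is then formally identical with $P_k^+,\Mk^+,\overline{\bu},\overline\partial_{\bu}$ replaced throughout by $P_k^-,\Mk^-,\bu,\partial_{\bu}$.

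The crucial ingredient is an orthogonality property of the Fischer decomposition (Proposition~\ref{fischer}) with respect to $(\,\cdot\,,\,\cdot\,)_{\bu}$. For each fixed $\bx$ the polynomial $\overline\partial_{\bx}f(\bx,\cdot)$ is $k$-homogeneous and harmonic in $\bu$ --- harmonic because $\Delta_{\bu}$ has scalar coefficients and so commutes with $\overline\partial_{\bx}$, while $f$ is monogenic, hence harmonic, in $\bu$ --- so by Proposition~\ref{fischer} it equals $R_kf(\bx,\bu)+\frac{1}{m+2k-2}\overline{\bu}\,q(\bx,\bu)$ with $q(\bx,\cdot)\in\Mkk^-(\bu)$, and the same holds for $g\overline\partial_{\bx}$. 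Thus the theorem follows once one knows $\int_{\Sm}g(\bu)\,\overline{\bu}\,q(\bu)\,dS(\bu)=0$ and the symmetric statement $\int_{\Sm}\overline{\bu}\,q(\bu)\,f(\bu)\,dS(\bu)=0$ for $q\in\Mkk^-(\bu)$ and $f,g$ in the relevant monogenic spaces. To prove this I would invoke a \emph{second} Stokes' Theorem, now for $\partial_{\bu}$ on the unit ball $\Bm$: since the outward normal on $\Sm$ is $\bu$ itself, $\overline{d\sigma_{\bu}}=\overline{\bu}\,dS(\bu)$ there, and since $\partial_{\bu}q=0$ one gets $\int_{\Sm}g\,\overline{\bu}\,q\,dS(\bu)=\int_{\Bm}(g\,\partial_{\bu})\,q\,d\bu$; and $g\,\partial_{\bu}=0$ because Clifford conjugation turns the left action of $\partial_{\bu}$ into the right action and interchanges $\Mk^+$ with $\Mk^-$, so that $\overline{g\,\partial_{\bu}}=\overline\partial_{\bu}\overline{g}=0$. (The homogeneity identity $\overline\partial_{\bu}(\overline{\bu}\,p_{k-1})=(m+2k-2)p_{k-1}$ from the proof of Proposition~\ref{fischer}, together with $\mathbb{E}_{\bu}=k$ on $k$-homogeneous polynomials, is what guarantees that the two Fischer summands are genuinely complementary, so that the monogenic part $R_kf$ of $\overline\partial_{\bx}f$ is unambiguous.)

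The main difficulty is exactly this orthogonality lemma. Because $(\,\cdot\,,\,\cdot\,)_{\bu}$ is a $\Clm$-valued bilinear pairing --- not an inner product --- and Clifford multiplication is non-commutative, one has to keep very careful track of which of $f,g$ takes values in $\Mk^+(\bu)$ and which in $\Mk^-(\bu)$, of left versus right actions of $\overline\partial_{\bu}$ and $\partial_{\bu}$, and of where Clifford conjugations must be inserted; this is precisely the bookkeeping carried out for the Dirac operator in \cite{Bures,DJR}, and I would reproduce those computations step by step, adding only the bars forced by the paravector model. The remaining points are routine: the hypothesis $f,g\in C^1(\Omega\times\Bm,\cdot)\cap C(\overline{\Omega}\times\Bm,\cdot)$ is exactly what licenses the Dirac Stokes' Theorem; the fact that $R_kf$ is only continuous is harmless since it appears solely under the volume integral; and interchanging the $\bu$- and $\bx$-integrations is justified by continuity on the compact set $\overline{\Omega}\times\Sm$.
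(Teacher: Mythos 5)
The paper states Theorem~\ref{Stokes} without proof, pointing only to the Dirac-operator analogues in \cite{Bures,DJR}, so there is no in-text argument to compare against; your plan (integrate the Dirac Stokes' Theorem over $\Sm$ in $\bu$ and use the Fischer decomposition to replace $\overline\partial_{\bx}$ by $R_k$) is exactly the standard route one would take to fill that gap.

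However, the justification of the key orthogonality step is wrong as written. You claim $g\,\partial_{\bu}=0$ ``because Clifford conjugation turns the left action of $\partial_{\bu}$ into the right action and interchanges $\Mk^+$ with $\Mk^-$, so that $\overline{g\,\partial_{\bu}}=\overline\partial_{\bu}\overline{g}=0$.'' Conjugation does \emph{not} interchange $\Mk^+$ with $\Mk^-$: it sends a left $\overline\partial_{\bu}$-monogenic $h$ to a \emph{right} $\partial_{\bu}$-monogenic $\overline{h}$ (one has $\overline{\overline\partial_{\bu}h}=\overline{h}\,\partial_{\bu}$), and symmetrically for $\partial_{\bu}$. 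So $g\in\Mk^+(\bu)$ gives $\overline{g}\,\partial_{\bu}=0$, not $g\,\partial_{\bu}=0$; equivalently, $\overline\partial_{\bu}\overline{g}=0$ would require $\overline{g}\in\Mk^+(\bu)$, which does \emph{not} follow from $g\in\Mk^+(\bu)$. Concretely, in $\mathcal{C}l_2$ the function $g(\bu)=\bu=u_0+e_1u_1$ lies in $\mathcal{M}_1^+(\bu)$, yet $g\,\partial_{\bu}=\partial_{u_0}\bu-(\partial_{u_1}\bu)e_1=1-e_1e_1=2\neq 0$, and also $\overline\partial_{\bu}\overline{g}=2\neq 0$. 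The correct hypothesis (implicit in the paper's loose use of $\Mk^{\pm}$, and visible in its later passing references to $\mathcal{M}_{k,r}^-(\bu)$) is that $g(\bx,\cdot)$ takes values in the \emph{right} $\partial_{\bu}$-monogenic space $\mathcal{M}_{k,r}^-(\bu)=\{g:\,g\,\partial_{\bu}=0\}$, i.e.\ the conjugate of $\Mk^+(\bu)$; then $g\,\partial_{\bu}=0$ holds by definition and your Stokes argument on $\Bm$ goes through. A related imprecision affects your ``symmetric'' orthogonality: for the term $(g\overline\partial_{\bx},f)_{\bu}$ the Fischer remainder of $g\overline\partial_{\bx}$ in the \emph{right} decomposition $\Hk=\overline{\Mk^+}\oplus\overline{\Mkk^-}\,\bu$ has the form $q'\bu$ with $q'\,\overline\partial_{\bu}=0$, so the vanishing integral you need is $\int_{\Sm}q'\,\bu\,f\,dS(\bu)=0$ (using $d\sigma_{\bu}=\bu\,dS(\bu)$ and Stokes for $\overline\partial_{\bu}$), not $\int_{\Sm}\overline{\bu}\,q\,f\,dS(\bu)=0$ with $q\in\Mkk^-(\bu)$ as you wrote. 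Once the function space for $g$ and the orientation of the Fischer complement are fixed, the rest of your outline is sound.
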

The Borel-Pompeiu formulas for $R_k$ and $R_k^{\dagger}$ are as follows.
\begin{theorem}[Borel-Pompeiu formula] \label{BPF}
Let $f\in C^1(\Omega\times\Bm,\Mk^+(\bu))\cap C(\overline{\Omega}\times\Bm,\Mk^+(\bu))$ and $g\in C^1(\Omega\times\Bm,\Mk^-(\bu))\cap C(\overline{\Omega}\times\Bm,\Mk^-(\bu))$, then, for all $(\boy,\bu)\in\Omega\times\Bm$, we have
\begin{align*}
f(\boy,\bu)
=&\int_{\partial\Omega}\left(E_k(\bx-\boy,\bu,\bov),d\sigma_{\bx}f(\bx,\bov)\right)_{\bov}\\
&-\int_{\Omega}\left(E_k(\bx-\boy,\bu,\bov),R_{k,\bov}f(\bx,\bov)\right)_{\bov}d\bx,\\
g(\boy,\bu)=&\int_{\partial\Omega}\left(E_k^{\dagger}(\bx-\boy,\bu,\bov),\overline{d\sigma_{\bx}}f(\bx,\bov)\right)_{\bov}\\
&-\int_{\Omega}\left(E_k^{\dagger}(\bx-\boy,\bu,\bov),R_{k,\bov}^{\dagger}f(\bx,\bov)\right)_{\bov}d\bx.
\end{align*}
\end{theorem}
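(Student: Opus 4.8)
The plan is to derive both identities from the Rarita–Schwinger Stokes' Theorem (Theorem \ref{Stokes}) by inserting the fundamental solution $E_k(\bx-\boy,\bu,\bov)$ in place of $g$, exactly as in the classical derivation of the Borel–Pompeiu formula from Stokes' theorem. First I would fix $\boy\in\Omega$ and $\bu\in\Bm$, and for small $\varepsilon>0$ consider the punctured domain $\Omega_\varepsilon=\Omega\setminus \overline{B(\boy,\varepsilon)}$. On $\Omega_\varepsilon$ the function $\bx\mapsto E_k(\bx-\boy,\bu,\bov)$ is smooth and, by the defining property of the fundamental solution, lies in $\ker$ of the appropriate Rarita–Schwinger operator acting from the right in the variable $\bx$ (i.e. $E_k(\bx-\boy,\bu,\bov)R_k = 0$ away from $\bx=\boy$, which follows from $\ovp_{\bx}$ annihilating $\overline{\bx-\boy}|\bx-\boy|^{-m}$ together with the intertwining identity $Z_k^{\pm}(\bu,\bo{a}\bov\bo{a})\bola=\bola Z_k^{\pm}(\overline{\bola}\bu\overline{\bola},\bov)$ recorded above). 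Applying Theorem \ref{Stokes} on $\Omega_\varepsilon$ with $g(\bx,\bov)=E_k(\bx-\boy,\bu,\bov)$ therefore kills the first volume integral and yields
\begin{align*}
\int_{\partial\Omega}\left(E_k(\bx-\boy,\bu,\bov)\,d\sigma_{\bx}f(\bx,\bov)\right)_{\bov}
-\int_{\partial B(\boy,\varepsilon)}\left(E_k(\bx-\boy,\bu,\bov)\,d\sigma_{\bx}f(\bx,\bov)\right)_{\bov}
=\int_{\Omega_\varepsilon}\left(E_k(\bx-\boy,\bu,\bov),R_{k,\bov}f(\bx,\bov)\right)_{\bov}d\bx,
\end{align*}
with the sign on the inner sphere coming from the orientation of $\partial\Omega_\varepsilon$.

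Next I would analyze the limit $\varepsilon\to 0$. The volume integral on the right converges to $\int_\Omega(E_k(\bx-\boy,\bu,\bov),R_{k,\bov}f(\bx,\bov))_{\bov}d\bx$ since $E_k$ has only a weak (integrable) singularity of order $|\bx-\boy|^{-(m-1)}$ at $\boy$, as is visible from its explicit form $c_{m,k}^{-1}\overline{\bx}|\bx|^{-m}Z_k^+(\cdot,\cdot)$ with $Z_k^+$ bounded on the unit sphere. The crux is to show that the boundary integral over $\partial B(\boy,\varepsilon)$ converges to $f(\boy,\bu)$. Parametrizing $\bx=\boy+\varepsilon\bo{a}$ with $\bo{a}\in\Sm$, one has $d\sigma_{\bx}=\varepsilon^{m-1}\bo{a}\,dS(\bo{a})$ and $E_k(\varepsilon\bo{a},\bu,\bov)=c_{m,k}^{-1}\varepsilon^{-(m-1)}\overline{\bo{a}}Z_k^+(\bo{a}\bu\bo{a},\bov)$; the powers of $\varepsilon$ cancel, and using $\overline{\bo{a}}\,\bo{a}=1$ together with the intertwining identity and the reproducing property $f(\bu)=\int_{\Sm}Z_k^+(\bu,\bov)f(\bov)dS(\bov)$ for elements of $\Mk^+(\bu)$, the integrand collapses (after the $\bov$-pairing and integration over $\bo{a}$, normalized by $c_{m,k}=(m-2)\omega_{m-1}/(m+2k-2)$) to $f(\boy,\bu)$ in the limit, by continuity of $f$ at $\boy$. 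This reproducing-kernel computation, keeping careful track of how $\bo{a}$ conjugates the variable inside $Z_k^+$, is the step I expect to be the main obstacle; it is the higher-spin analogue of the fact that the Cauchy kernel integrates to $1$ over a small sphere, and it is where the normalization constant $c_{m,k}$ is pinned down.

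Finally, the second identity — the one for $R_k^\dagger$, $E_k^\dagger$ and $g\in\Mk^-(\bu)$ — follows by the identical argument using instead the second half of Theorem \ref{Stokes} (the one with $\overline{d\sigma_{\bx}}$ and $\partial_{\bx}$), the relation $E_k^\dagger(\bx,\bu,\bov)=c_{m,k}^{-1}\bx|\bx|^{-m}Z_k^-(\overline{\bx}\bu\overline{\bx}|\bx|^{-2},\bov)$, the symmetry $Z_k^+(\bu,\bov)=Z_k^-(\overline{\bu},\overline{\bov})$, and the reproducing property of $Z_k^-$ on $\Mk^-(\bu)$; everything dualizes verbatim under $\bx\mapsto\overline{\bx}$, $\bu\mapsto\overline{\bu}$. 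I would state this explicitly but only write out the first case in detail.
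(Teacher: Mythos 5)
Your proposal is correct and follows the standard Clifford-analytic derivation that the paper itself defers to (\cite{Bures,DJR} are cited in lieu of a proof), namely: apply the Rarita--Schwinger Stokes' theorem on a punctured domain with $g=E_k(\bx-\boy,\bu,\bov)$, use that $E_k$ is a right null solution away from the pole, let $\varepsilon\to 0$, and show that the small-sphere boundary term reproduces $f(\boy,\bu)$. One remark on the step you flagged as the main obstacle: after writing $\bx=\boy+\varepsilon\bt$ and simplifying $E_k\,d\sigma_{\bx}$ via the intertwining identity $Z_k^{+}(\bu,\bo{a}\bov\bo{a})\bo{a}=\bo{a}Z_k^{+}(\overline{\bo{a}}\bu\overline{\bo{a}},\bov)$ (taken with $\bo{a}=\overline{\bt}$, so that $\overline{\bt}Z_k^{+}(\bt\bu\bt,\bov)\bt=Z_k^{+}(\bu,\overline{\bt}\bov\overline{\bt})$), the remaining integral over $\bt\in\Sm$ is not the reproducing property alone: you must average the harmonic kernel over the sphere in $\bt$, and this is exactly Lemma \ref{harortho} of the paper, which yields the factor $c_{m,k}$ that cancels $c_{m,k}^{-1}$ and reduces you to $\int_{\Sm}Z_k^{+}(\bu,\bov)f(\boy,\bov)\,dS(\bov)=f(\boy,\bu)$. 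Your sketch correctly anticipates that $c_{m,k}$ is ``pinned down'' at this point, but it would be cleaner to invoke Lemma \ref{harortho} explicitly rather than folding it into the reproducing property. The other small point left implicit is the verification that $E_k(\cdot-\boy,\bu,\bov)$ is annihilated by the \emph{right} Rarita--Schwinger operator in $(\bx,\bov)$ away from $\boy$; this is standard (it follows from right monogenicity of $\overline{\bx}/|\bx|^m$ together with the intertwining relation) but deserves a sentence, since it is what kills the first volume integral in Stokes' theorem.
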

We denote
\begin{align*}
&T_kf(\boy,\bu)=-\int_{\Omega}\left(E_k(\bx-\boy,\bu,\bov),f(\bx,\bov)\right)_{\bov}d\bx,\\
&T_k^{\dagger}g(\boy,\bu)=-\int_{\Omega}\left(E_k^{\dagger}(\bx-\boy,\bu,\bov),g(\bx,\bov)\right)_{\bov}d\bx,\\
&F_kf(\boy,\bu)=\int_{\partial\Omega}\left(E_k(\bx-\boy,\bu,\bov),d\sigma_{\bx}f(\bx,\bov)\right)_{\bov},\\
&F_k^{\dagger}g(\boy,\bu)=\int_{\partial\Omega}\left(E_k^{\dagger}(\bx-\boy,\bu,\bov),\overline{d\sigma_{\bx}}g(\bx,\bov)\right)_{\bov},
\end{align*}
$T_k$ and $T_k^{\dagger}$ are well-known as the \emph{Teodorescu transform}, and $F_k,F_k^{\dagger}$ are known as the \emph{Cauchy–Bitsadze operator}, see \cite[Chapter 8]{23}. Therefore, the Borel-Pompeiu formula can be rewritten in the following short form
\begin{align*}
&f(\boy,\bu)=F_kf(\boy,\bu)+T_k(R_kf)(\boy,\bu),\\
&g(\boy,\bu)=F_k^{\dagger}g(\boy,\bu)+T_k^{\dagger}(R_k^{\dagger}g)(\boy,\bu),
\end{align*}
for all $f\in C^1(\Omega\times\Bm,\Mk^+(\bu))\cap C(\overline{\Omega}\times\Bm,\Mk^+(\bu))$ and $g\in C^1(\Omega\times\Bm,\Mk^-(\bu))\cap C(\overline{\Omega}\times\Bm,\Mk^-(\bu))$. Similar as in \cite{DingT,DJR}, $T_k$ and $T_k^{\dagger}$ are the right inverses of $R_k$ and $R_k^{\dagger}$, respectively.
\begin{proposition}\label{Inverse}
Let $f\in C^1(\Omega\times\Bm,\Mk^+(\bu))\cap C(\overline{\Omega}\times\Bm,\Mk^+(\bu))$ and $g\in C^1(\Omega\times\Bm,\Mk^-(\bu))\cap C(\overline{\Omega}\times\Bm,\Mk^-(\bu))$. Then, we have
\begin{align*}
R_kT_kf(\boy,\bu)=f(\boy,\bu),\ R_k^{\dagger}T_k^{\dagger}g(\boy,\bu)=g(\boy,\bu),
\end{align*}
for all $(\boy,\bu)\in\Omega\times\Bm$.
\end{proposition}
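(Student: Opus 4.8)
The plan is to argue as in the classical one--variable theory, and as in \cite{DingT,DJR} for the Dirac--operator version of the Rarita--Schwinger setting: since $T_k$ is built from the fundamental solution $E_k$ of $R_k$, applying $R_k$ to $T_kf$ should recover $f$ by the defining property of $E_k$. Concretely, $E_k$ is characterised by being a null solution of $R_k$ in the spatial variable away from the singularity, together with the distributional normalization $R_{k,\bx}E_k(\bx,\bu,\bov)=\delta(\bx)\,Z_k^+(\bu,\bov)$, where $Z_k^+$ is the reproducing kernel of $\Mk^+(\bu)$ recorded in Section~2. Granting this, the computation is formal: $R_{k,\boy}T_kf(\boy,\bu)=-\int_{\Omega}\big(R_{k,\boy}E_k(\bx-\boy,\bu,\bov),f(\bx,\bov)\big)_{\bov}d\bx$, and since $R_k$ is first order in the spatial variable, $R_{k,\boy}E_k(\bx-\boy,\cdot)=-(R_kE_k)(\bx-\boy,\cdot)=-\delta(\bx-\boy)Z_k^+(\bu,\bov)$; the two minus signs cancel against the delta, leaving $\big(Z_k^+(\bu,\bov),f(\boy,\bov)\big)_{\bov}=f(\boy,\bu)$ by the reproducing property $\int_{\Sm}Z_k^+(\bu,\bov)g(\bov)\,dS(\bov)=g(\bu)$ for $g\in\Mk^+(\bu)$. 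The identity for $R_k^{\dagger}$ and $T_k^{\dagger}$ follows verbatim after replacing $E_k,Z_k^+,\overline{\partial}_{\bx}$ by $E_k^{\dagger},Z_k^-$ and the conjugate operator $\partial_{\bx}$ and surface element $\overline{d\sigma_{\bx}}$.

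To make the above rigorous I would excise a ball: fix $(\boy,\bu)\in\Omega\times\Bm$ and $\varepsilon>0$ with $\overline{B_{\varepsilon}(\boy)}\subset\Omega$, write $T_kf(\boy,\bu)$ as the sum of the integrals over $\Omega\setminus B_{\varepsilon}(\boy)$ and over $B_{\varepsilon}(\boy)$, and apply $R_{k,\boy}$. The integral over $B_{\varepsilon}(\boy)$ and its derivative are $O(\varepsilon)$ because $E_k$ is only weakly singular. For the integral over $\Omega\setminus B_{\varepsilon}(\boy)$ one differentiates, taking into account the $\boy$--dependence of the domain; the interior contribution vanishes since $R_kE_k=0$ there, and the moving boundary produces a term supported on $\partial B_{\varepsilon}(\boy)$ which, exactly as in the proof of the Borel--Pompeiu formula (Theorem~\ref{BPF}), converges as $\varepsilon\to0$ --- using the precise value $c_{m,k}=(m-2)\omega_{m-1}/(m+2k-2)$ and the reproducing property of $Z_k^+$ --- to $f(\boy,\bu)$. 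This is essentially Theorem~\ref{BPF} carried out on $\Omega\setminus B_{\varepsilon}(\boy)$; one may also invoke the Rarita--Schwinger Stokes' theorem (Theorem~\ref{Stokes}) to organize the boundary contributions.

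The main obstacle is precisely this last step: controlling the interchange of $R_{k,\boy}$ with the singular integral and pinning down the $\varepsilon\to0$ limit of the $\partial B_{\varepsilon}(\boy)$--term, i.e.\ verifying that $E_k$ is genuinely a fundamental solution of $R_k$ with the stated normalization. That verification rests on the explicit form of $E_k$ together with the classical fact that $\overline{\partial}_{\bx}(\overline{\bx}/|\bx|^{m})$ equals $\omega_{m-1}$ times the Dirac measure at $0$; once it is in hand, the remainder is bookkeeping with the Fischer projectors $P_k^{\pm}$ and the Gegenbauer identities of Section~2. A shorter but less direct route is to apply $R_k$ to the Borel--Pompeiu identity $f=F_kf+T_k(R_kf)$ and use that $F_kf$ is a null solution of $R_k$ in $\Omega$, giving $R_kT_k(R_kf)=R_kf$; this establishes the claim on the range of $R_k$, and one would then have to extend it to all admissible $f$ by a density or continuity argument --- the complication that the fundamental--solution computation above sidesteps.
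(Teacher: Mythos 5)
The paper itself gives no proof of Proposition~\ref{Inverse}: it simply records the statement and remarks that it holds ``similar as in \cite{DingT,DJR}.'' Your excision argument is exactly the proof those references use, and its structure is the one the paper itself carries out a page later (inside the proof of Theorem~\ref{PiIR}, for the unprojected derivative $\partial_{\boy}T_kf$). So your approach matches the intended one, and the formal heuristic $R_{k,\bx}E_k(\bx,\bu,\bov)=\delta(\bx)Z_k^+(\bu,\bov)$, made rigorous by cutting out $B_\varepsilon(\boy)$ and letting $\varepsilon\to0$, is both correct and appropriately hedged by your discussion of where the real work lies.

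One precision worth adding: the $\varepsilon\to0$ limit of the $\partial B_\varepsilon(\boy)$--term is not obtained from the reproducing property of $Z_k^+$ alone. After substituting $\bx-\boy=\varepsilon\bt$ the boundary term reduces to $c_{m,k}^{-1}\int_{\Sm}f(\boy,\bt\bu\bt)\,dS(\bt)$, and the decisive ingredient that collapses this to $f(\boy,\bu)$ is the averaging identity of Lemma~\ref{harortho},
\begin{equation*}
\int_{\Sm}h_k(\bt\bu\bt)\,dS(\bt)=c_{m,k}\,h_k(\bu),
\end{equation*}
applied to $h_k(\cdot)=f(\boy,\cdot)$; this is exactly the step labelled $II$ in the paper's computation in Theorem~\ref{PiIR}. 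You gesture at this by recalling the value of $c_{m,k}$, but naming Lemma~\ref{harortho} explicitly is what closes the loop. Your final observation about the Borel--Pompeiu shortcut is also accurate: applying $R_k$ to $f=F_kf+T_k(R_kf)$ and using $R_kF_kf=0$ only yields $R_kT_kh=h$ for $h$ in the range of $R_k$, so it does not by itself prove the proposition for arbitrary $f\in C^1\cap C(\overline{\Omega}\times\Bm,\Mk^+(\bu))$ without an additional density or surjectivity argument; the direct excision computation avoids this.
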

We present a mapping property of $R_k$ and $R_k^{\dagger}$ below, which is needed for the later discussion on the norm estimate of $\Pi$-operator.
\begin{proposition}\label{Rkbound}
Let $\Omega\subset\R^{m+1}$ be a bounded domain. Then, we have
\begin{align*}
R_k:\ W_2^1(\Omega\times\Bm,\Mk^-(\bu))\longrightarrow L^2(\Omega\times\Bm,\Mk^+(\bu))
\end{align*}
is bounded.
\end{proposition}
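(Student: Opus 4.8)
The plan is to show that the Rarita-Schwinger operator $R_k$ maps the Sobolev space $W_2^1(\Omega\times\Bm,\Mk^-(\bu))$ boundedly into $L^2(\Omega\times\Bm,\Mk^+(\bu))$ by unwinding the definition $R_kf(\bx,\bu)=P_k^+\ovp_{\bx}f(\bx,\bu)=\big(1+\tfrac{\overline{\bu}\,\overline{\partial}_{\bu}}{m+2k-2}\big)\ovp_{\bx}f(\bx,\bu)$ into a sum of elementary operators, each of which is plainly bounded from $W_2^1$ to $L^2$. First I would note that $\ovp_{\bx}=\partial_{x_0}+\sum_{j=1}^{m-1}\be_j\partial_{x_j}$ is a first-order constant-coefficient differential operator in $\bx$ with Clifford-valued coefficients of norm $1$, so for any $f\in W_2^1$ one has a pointwise bound $|\ovp_{\bx}f(\bx,\bu)|\le C\sum_{|\balpha|\le 1}|\partial_{\bx}^{\balpha}f(\bx,\bu)|$ (the constant depending only on $m$), and integrating over $\Omega\times\Bm$ gives $\|\ovp_{\bx}f\|_{L^2(\Omega\times\Bm,\Clm)}\le C\|f\|_{W_2^1(\Omega\times\Bm,\Clm)}$. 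Here I use that the modified $L^2$ norm on $\Mk^{\pm}(\bu)$-valued functions is equivalent to the ambient $L^2(\Omega\times\Bm,\Clm)$ norm up to a constant, by the homogeneity remark and the closedness proposition already stated.

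Next I would handle the projection factor $P_k^+=1+\tfrac{\overline{\bu}\,\overline{\partial}_{\bu}}{m+2k-2}$. For fixed $\bx$, the function $\ovp_{\bx}f(\bx,\cdot)$ is a $k$-homogeneous polynomial in $\bu$ (since differentiation in $\bx$ commutes with the polynomial structure in $\bu$ and preserves homogeneity degree), and $P_k^+$ is a fixed linear endomorphism on the finite-dimensional space $\mathcal{P}_k(\bu)$ — more precisely, by the Fischer decomposition (Proposition \ref{fischer}) it is the projection of $\Hk(\bu)$ onto $\Mk^+(\bu)$, extended appropriately. Because $\Mk^{\pm}(\bu)$ and $\mathcal{P}_k(\bu)$ are finite-dimensional, $P_k^+$ is bounded on $L^2(\Sm)$-valued polynomial coefficients with operator norm depending only on $m$ and $k$; concretely one can realize this via the reproducing kernel $Z_k^+(\bu,\bov)$, so that $P_k^+$ acting on a $k$-homogeneous polynomial is given by integration against $Z_k^+$ over $\Sm$, and $\|Z_k^+\|_{L^\infty(\Sm\times\Sm)}<\infty$ gives the bound. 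Composing: for $f\in W_2^1(\Omega\times\Bm,\Mk^-(\bu))$,
\begin{align*}
\|R_kf\|_{L^2(\Omega\times\Bm,\Mk^+(\bu))}
=\|P_k^+\ovp_{\bx}f\|_{L^2}
\le \|P_k^+\|_{\mathrm{op}}\,\|\ovp_{\bx}f\|_{L^2}
\le C(m,k)\,\|f\|_{W_2^1(\Omega\times\Bm,\Mk^-(\bu))},
\end{align*}
which is the claimed boundedness. (One should also check $R_kf$ genuinely lands in $\Mk^+(\bu)$ for a.e.\ $\bx$, but that is immediate from the definition of $P_k^+$ and does not affect the estimate.)

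The main obstacle, such as it is, is purely bookkeeping rather than analytic: one must be careful that the Sobolev norm on $W_2^1(\Omega\times\Bm,\Mk^-(\bu))$ — which by the convention fixed earlier omits regularity in $\bu$ and only demands one $\bx$-derivative — actually controls all the pieces appearing in $R_kf$. The factor $\overline{\bu}\,\overline{\partial}_{\bu}$ differentiates in $\bu$, but since every function in sight is a $k$-homogeneous polynomial in $\bu$ this is a bounded operation on the fixed finite-dimensional coefficient space and costs nothing in $\bx$-regularity; the only genuine derivative is the single $\ovp_{\bx}$, which is exactly what $W_2^1$ supplies. Thus the boundedness of the $\bu$-side reduces to finite-dimensional linear algebra (or equivalently to the $L^\infty$ bound on the zonal kernel $Z_k^+$), and the boundedness of the $\bx$-side is the tautology that a first-order constant-coefficient operator maps $W_2^1\to L^2$. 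The equivalence of the modified and ambient $L^2$ norms, and the closedness of $L^2(\Omega\times\Bm,\Mk^{\pm}(\bu))$, are quoted from the propositions above to make the composition legitimate.
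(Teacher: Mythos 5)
Your proof is correct, but it takes a genuinely different route from the paper's. Both arguments reduce to showing that the Fischer projection $P_k^+$ is bounded on $L^2(\Sm)$, and that $\ovp_{\bx}$ maps $W_2^1$ to $L^2$ boundedly (the latter step is identical in both). Where you differ is in how $P_k^+$ is controlled. You invoke the finite-dimensionality of $\Hk(\bu)$ (or, as an alternative phrasing, an $L^\infty$ bound on the zonal kernel), which yields a constant $C(m,k)$ of unspecified size. The paper instead proves a Stokes-type orthogonality,
\begin{align*}
\int_{\Sm}\overline{P_k^+g(\bu)}\,\big(Q_k^+g(\bu)\big)\,dS(\bu)=0,
\end{align*}
for $g\in\Hk(\bu)$, where $Q_k^+=I-P_k^+$; this shows $P_k^+$ is an \emph{orthogonal} projection and hence has operator norm $\le 1$, so that $\|P_k^+\ovp_{\boy}f\|_{L^2}\le\|\ovp_{\boy}f\|_{L^2}\le\sqrt{m}\,\|f\|_{W_2^1}$. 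The paper's route buys a sharp, explicit constant (needed later in the norm estimate for the higher spin $\Pi$-operator and, via that, in the contraction condition for the Beltrami equation), while yours is more elementary and generalizes to any bounded linear map on the fiber. One small caveat: your parenthetical claim that $P_k^+$ acting on $\Hk(\bu)$ is \emph{realized} by integration against $Z_k^+$ over $\Sm$ is not automatic — it requires precisely the orthogonality of $Z_k^+$ against $\overline{\bu}\Mkk^-(\bu)$ that the paper establishes via Stokes' theorem — but since your main argument rests on finite-dimensionality and not on this kernel representation, the proof as a whole stands.
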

\begin{proof}
In the Fischer decomposition in Proposition \ref{fischer}, we denote $Q_k^+=I-P_k^+$ , which is the projection from $\Hk(\bu)$ to $\overline{\bu}\Mkk^-(\bu)$. Suppose that $g\in\Hk(\bu)$, then we have $P_k^+g\in \Mk^+(\bu)$ and $Q_k^+g\in\overline{\bu}\Mkk^-(\bu)$, which gives that $\overline{P_k^+g}\in\mathcal{M}_{k,r}^-(\bu)$, in other words, $(P_k^+g)\partial_{\bu}=0$. Therefore, the Stokes' Theorem in 
\ref{Stokes} tells us that
\begin{align}\label{Ortho}
\int_{\Sm}\overline{P_k^+g(\bu)}(Q_k^+g(\bu))dS(\bu)=0.
\end{align}
Now, we can see that
\begin{align*}
&\|R_kf\|_{L^2(\Omega\times\Bm,\Mk^-(\bu))}^2=\int_{\Omega}\int_{\Sm}|R_kf(\boy,\bu)|^2dS(\bu)d\boy\\
=&\bigg[\int_{\Omega}\int_{\Sm}\overline{R_kf(\boy,\bu)}R_kf(\boy,\bu)dS(\bu)d\boy\bigg]_0\\
=&\bigg[\int_{\Omega}\int_{\Sm}\overline{P_k^+\overline{\partial}_{\boy}f(\boy,\bu)}P_k^+\overline{\partial}_{\boy}f(\boy,\bu)dS(\bu)d\boy\bigg]_0\\
\leq&\bigg[\int_{\Omega}\int_{\Sm}\overline{(P_k^++Q_k^+)\overline{\partial}_{\boy}f(\boy,\bu)}(P_k^++Q_k^+)\overline{\partial}_{\boy}f(\boy,\bu)dS(\bu)d\boy\bigg]_0\\
=&\bigg[\int_{\Omega}\int_{\Sm}\overline{\overline{\partial}_{\boy}f(\boy,\bu)}\overline{\partial}_{\boy}f(\boy,\bu)dS(\bu)d\boy\bigg]_0\\
=&\int_{\Omega}\int_{\Sm}|\overline{\partial}_{\boy}f(\boy,\bu)|^2dS(\bu)d\boy=\|\overline{\partial}_{\boy}f\|_{L^2(\Omega\times\Bm,\Mk^-(\bu))}^2\\
\leq&m\sum_{i=0}^{m-1}\|\partial_{y_i}f\|_{L^2(\Omega\times\Bm,\Mk^-(\bu))}^2\leq m\|f\|_{W_2^1(\Omega\times\Bm,\Mk^+(\bu))}^2,
\end{align*}
which completes the proof.
\end{proof}
\begin{remark}
The proof of the Proposition above heavily relies on the identity \eqref{Ortho}, which only gives us the results on the $L^2$ instead of $L^p$. This is also the reason that the mapping properties obtained later are all on the $L^2$ space.
\end{remark}
An important technical lemma on homogeneous harmonic polynomials is as follows. 
\begin{lemma}\label{harortho}\cite[Lemma 6]{DJR}
	Suppose $h_k:\Rm\longrightarrow \Clm$ is a harmonic polynomial homogeneous of degree $k$ with $m\geq 2$. Suppose $\bu\in\Sm$, then
	\begin{align*}
		\int_{\Sm}h_k(\bx\bu\bx)dS(\bx)=c_{m,k}h_k(\bu).
	\end{align*}
\end{lemma}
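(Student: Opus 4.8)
The plan is to reduce the integral over $\Sm$ to the reproducing‑kernel property of the inner spherical monogenics and then exploit the Gegenbauer structure of $Z_k^\pm$. First I would observe that $h_k$, being a $\Clm$‑valued harmonic polynomial homogeneous of degree $k$, admits a Fischer decomposition $h_k=p_k+\overline{\bx}\,q_{k-1}$ with $p_k\in\Mk^+(\bx)$ and $q_{k-1}\in\Mkk^-(\bx)$, by Proposition~\ref{fischer}. Because the map $\bx\mapsto \bx\bu\bx$ (for fixed $\bu\in\Sm$) sends the unit sphere to itself and, up to the reflection/conjugation identity $\bo{a}\bx\bo{a}=-\overline{\bx}+2\langle\bo{a},\overline{\bx}\rangle\bo{a}$ recorded above, behaves well under the projections $P_k^\pm$, it suffices to treat each Fischer component separately; the cross terms will integrate to zero by the orthogonality of spherical monogenics of different type (the same mechanism as \eqref{Ortho}). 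So the heart of the matter is the case $h_k=p_k\in\Mk^+$.

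For $p_k\in\Mk^+(\bu)$ the reproducing property gives $p_k(\bu)=\int_{\Sm}Z_k^+(\bu,\bov)p_k(\bov)\,dS(\bov)$, and I would run this argument with the roles of the variables arranged so that $\int_{\Sm}p_k(\bx\bu\bx)\,dS(\bx)$ becomes $\int_{\Sm}\big(\int_{\Sm}Z_k^+(\bx\bu\bx,\bov)\,dS(\bx)\big)p_k(\bov)\,dS(\bov)$ after swapping the (finite, compact) integrals. Then I compute the inner integral $\int_{\Sm}Z_k^+(\bx\bu\bx,\bov)\,dS(\bx)$ explicitly: writing $Z_k^+$ in Gegenbauer form, the scalar part is $\tfrac{2\mu+k}{2\mu}|\bu|^k|\bov|^kC_k^\mu(\tfrac{\langle\bx\bu\bx,\bov\rangle}{|\bu||\bov|})$ and the wedge part is $(\bx\bu\bx)\wedge\overline{\bov}\,|\bu|^{k-1}|\bov|^{k-1}C_{k-1}^{\mu+1}(\cdot)$. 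Since $|\bx\bu\bx|=|\bu|$ and, by the reflection identity, $\langle\bx\bu\bx,\bov\rangle$ as $\bx$ ranges over $\Sm$ runs through all inner products $\langle\bw,\bov\rangle$ with $|\bw|=|\bu|$ with the uniform (rotation‑invariant) measure, averaging the scalar part over $\bx\in\Sm$ reproduces, up to the constant $c_{m,k}$, the value at $\bv=\bu$ — this is exactly the classical/harmonic content of the lemma, and I expect the normalization constant to fall out as $c_{m,k}=\tfrac{(m-2)\omega_{m-1}}{m+2k-2}$ precisely because of the $\tfrac{2\mu+k}{2\mu}$ prefactor with $\mu=\tfrac{m-2}{2}$. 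For the wedge part, using the computation $(\bu\wedge\overline{\bo{a}\bov\bo{a}})\bola=\bola(\overline{\bola}\bu\overline{\bola})\wedge\overline{\bov}$ displayed above (which yields $Z_k^\pm(\bu,\bo a\bov\bo a)\bola=\bola Z_k^\pm(\overline{\bola}\bu\overline{\bola},\bov)$), I can transport the $\bx$‑dependence off the wedge factor and integrate the remaining Gegenbauer scalar against $dS(\bx)$ the same way.

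The cleanest route to the constant, and the one I would actually write up, is to invoke the scalar model: testing the identity $\int_{\Sm}h_k(\bx\bu\bx)\,dS(\bx)=c h_k(\bu)$ on a single well‑chosen $h_k$ (for instance $h_k(\bx)=\langle\bx,\bu\rangle^k$ minus its harmonic corrector, or a zonal harmonic) pins $c$, and then linearity plus the reproducing property upgrades it to all of $\Mk^+$ and, via the Fischer decomposition, to all harmonic $h_k$. The main obstacle I anticipate is not the scalar averaging — that is standard Funk–Hecke / Gegenbauer bookkeeping — but rather verifying that the nonscalar (Clifford‑algebra‑valued, in particular the bivector $\bx\wedge\overline{\bov}$) pieces do not contribute an independent term: one must check that after integrating over $\bx\in\Sm$ the wedge part either vanishes by an odd‑symmetry argument in $\bx$ or recombines with the scalar part to give precisely $c_{m,k}$ times the wedge part of $Z_k^+(\bu,\bov)$, so that the full kernel $Z_k^+$ is reproduced and not just its scalar component. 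This is where the identity $Z_k^\pm(\bu,\bo a\bov\bo a)\bola=\bola Z_k^\pm(\overline{\bola}\bu\overline{\bola},\bov)$ and the relation $\tfrac{d}{dt}C_k^\mu=2\mu C_{k-1}^{\mu+1}$ linking the two Gegenbauer factors do the real work, and I would spend the bulk of the proof making that recombination explicit.
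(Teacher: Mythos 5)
The paper does not prove this lemma at all; it is quoted verbatim from \cite[Lemma 6]{DJR}. So I will assess your argument on its own merits.

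Your proposal has genuine gaps, and the overall strategy is both incomplete and more complicated than it needs to be.

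First, the Fischer decomposition $h_k=p_k+\overline{\bw}\,q_{k-1}$ does not reduce the problem to the monogenic case the way you suggest. There are no ``cross terms'' here to integrate away by orthogonality: you are integrating $h_k(\bx\bu\bx)$, not $|h_k(\bx\bu\bx)|^2$, so by linearity you simply get the sum $\int p_k(\bx\bu\bx)\,dS(\bx)+\int \overline{\bx\bu\bx}\,q_{k-1}(\bx\bu\bx)\,dS(\bx)$, and the second summand is left completely untreated. The function $\bw\mapsto\overline{\bw}\,q_{k-1}(\bw)$ is harmonic but is not reproduced by $Z_k^+$ (and is not in $\Mk^-$ either, so $Z_k^-$ does not help directly). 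As written the argument only addresses $\Mk^+\subsetneq\Hk$.

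Second, even on $\Mk^+$ the argument is circular unless the identity $\int_{\Sm}Z_k^+(\bx\bu\bx,\bov)\,dS(\bx)=c_{m,k}Z_k^+(\bu,\bov)$ is established directly, and you acknowledge that this Gegenbauer/wedge recombination ``is where the bulk of the proof'' would go but do not carry it out. Your fallback --- ``test the identity on a single well-chosen $h_k$ to pin $c$'' --- does not work on its own: knowing $Kh_k=c\,h_k$ for one particular $h_k$ says nothing about $Kh_k'$ for a different $h_k'$ unless you have first shown that $K\colon h\mapsto\int_{\Sm}h(\bx\bu\bx)\,dS(\bx)$ is a scalar multiple of the identity on $\Hk$. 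That is precisely the content of the lemma, so invoking a test function at that stage begs the question.

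The missing ingredient is an equivariance argument. Since $\Clm$ plays no role (the coefficients are constants, so one may take $h_k$ real-valued and work component by component), the natural proof shows that $K$ is a rotation-equivariant linear map from $\Hk$ to $k$-homogeneous polynomials, that its image actually lies in $\Hk$ (because $\bu\mapsto\bx\bu\bx$ is an isometry for $\bx\in\Sm$, so $h_k(\bx\bu\bx)$ is harmonic in $\bu$ and averaging preserves this), and then invokes Schur's lemma on the irreducible module $\Hk$ to conclude $K=c\cdot\mathrm{Id}$. Only then does a single test --- the zonal harmonic $h_k(\bw)=|\bw|^kC_k^{\mu}(\langle\bw,\balpha\rangle/|\bw|)$ at $\bu=\balpha$, which reduces the left side to a one-dimensional Gegenbauer integral $\int_{\Sm}C_k^{\mu}(1-2\langle\bx,\balpha\rangle^2)\,dS(\bx)$ --- legitimately pin $c=c_{m,k}$. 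This route avoids the reproducing kernels, the Fischer splitting, and the wedge-term bookkeeping entirely, all of which are distractions for a lemma that is really about scalar harmonic polynomials. If you want to keep your kernel-based approach, you must (i) handle the $\overline{\bu}\Mkk^-$ summand separately and (ii) actually perform the computation of $\int_{\Sm}Z_k^+(\bx\bu\bx,\bov)\,dS(\bx)$, neither of which the current writeup does.

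One further caution specific to this paper's conventions: the paper works with paravectors $\bx=x_0e_0+\cdots+x_{m-1}e_{m-1}$ and the Cauchy--Riemann operator, while \cite{DJR} works with pure vectors and the Dirac operator. The reflection formula $\bx\bu\bx=-\overline{\bu}+2\langle\bx,\overline{\bu}\rangle\bx$ in the paravector setting treats the $e_0$ direction asymmetrically, and a direct check on $h_1(\bw)=w_0$ versus $h_1(\bw)=w_j$, $j\geq1$, produces opposite signs in front of $c_{m,1}$. Whatever proof you write should verify that the claimed constant (and sign) is correct in the paravector conventions of this paper rather than simply transposing the Dirac-setting statement.
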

\section{Definitions and properties for the higher spin $\Pi$-operator}
An analog of the complex $\Pi$-operator in higher spin Clifford analysis is the following.
\begin{defn} 
Let $f\in C^1(\Omega\times\Bm,\Mk^+(\bu))\cap C(\overline{\Omega}\times\Bm,\Mk^+(\bu))$, the \emph{higher spin $\Pi$-operator} is given by
\begin{align*}
\Pi f(\boy,\bu):=R_k^{\dagger}T_kf(\boy,\bu).
\end{align*}
\end{defn}
There is also an analog of the higher spin $\Pi$-operator, which is given by
\begin{align*}
\Pi^{\dagger} g(\boy,\bu):=R_kT_k^{\dagger}g(\boy,\bu),
\end{align*}
where $g\in C^1(\Omega\times\Bm,\Mk^-(\bu))\cap C(\overline{\Omega}\times\Bm,\Mk^-(\bu))$. Next, we will give integral expressions for $\Pi$ and $\Pi^{\dagger}$. 
\begin{theorem}[Integral representations]\label{PiIR}
Let $f\in C^1(\Omega\times\Bm,\Mk^+(\bu))\cap C(\overline{\Omega}\times\Bm,\Mk^+(\bu))$ and $g\in C^1(\Omega\times\Bm,\Mk^-(\bu))\cap C(\overline{\Omega}\times\Bm,\Mk^-(\bu))$. Then, we have
\begin{align*}
&\Pi f(\boy,\bu)=R_k^{\dagger}T_kf(\boy,\bu)=P_k^-\partial_{\boy}T_kf(\boy,\bu)\\
=&\int_{\Omega}\int_{\Sm}R_k^{\dagger}E_k(\bx-\boy,\bu,\bov)f(\bx,\bov)dS(\bov)d\bx
+P_k^-f(\boy,\bu)\\
-&c_{m,k}^{-1}P_k^-\int_{\Sm}\frac{2\overline{\bt}}{2-m}\bigg[\overline{\bu}\langle\bt,\overline{\partial}_{\bbeta}\rangle+\langle\bt,\overline{\bu}\rangle\partial_{\bbeta}+2\overline{\bt}\langle\bt,\overline{\bu}\rangle\langle\bt,\overline{\partial}_{\bbeta}\rangle\bigg]f(\boy,\bt\bu\bt)d\sigma(\bt),\\
&\Pi^{\dagger} f(\boy,\bu)=R_kT_k^{\dagger}f(\boy,\bu)=P_k^+\overline{\partial}_{\boy}T_k^{\dagger}f(\boy,\bu)\\
=&\int_{\Omega}\int_{\Sm}R_kE_k^{\dagger}(\bx-\boy,\bu,\bov)f(\bx,\bov)dS(\bov)d\bx
+P_k^+f(\boy,\bu)\\
-&c_{m,k}^{-1}P_k^+\int_{\Sm}\frac{2{\bt}}{2-m}\bigg[{\bu}\langle\bt,{\partial}_{\bbeta}\rangle+\langle\bt,{\bu}\rangle\overline{\partial}_{\bbeta}+2{\bt}\langle\bt,{\bu}\rangle\langle\bt,{\partial}_{\bbeta}\rangle\bigg]f(\boy,\bt\bu\bt)d\sigma(\bt),
\end{align*}
where $\bbeta=\displaystyle\frac{(\bx-\boy)\bu(\bx-\boy)}{|\bx-\boy|^2},\bo{t}=\frac{\bx-\boy}{|\bx-\boy|}$.
\end{theorem}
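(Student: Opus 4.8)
The plan is to unwind the definition $\Pi f(\boy,\bu)=R_k^{\dagger}T_k f(\boy,\bu)=P_k^{-}\partial_{\boy}T_k f(\boy,\bu)$ and then to compute $\partial_{\boy}T_k f$ by differentiating the weakly singular integral defining the Teodorescu transform. Since $|E_k(\bx-\boy,\bu,\bov)|$ is of order $|\bx-\boy|^{1-m}$, its $\boy$-gradient is only a Calder\'on--Zygmund kernel, so the differentiation must be regularized: for $\boy\in\Omega$ choose $\epsilon>0$ with $\overline{B(\boy,\epsilon)}\subset\Omega$ and split $T_k f(\boy,\bu)$ into the integral over $\Omega\setminus B(\boy,\epsilon)$, where the integrand is smooth, plus the integral over $B(\boy,\epsilon)$, which tends to $0$ uniformly on compacta as $\epsilon\to0$ because the kernel is locally integrable. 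Hence $\partial_{\boy}T_k f(\boy,\bu)=-\lim_{\epsilon\to0}\partial_{\boy}\int_{\Omega\setminus B(\boy,\epsilon)}(E_k(\bx-\boy,\bu,\bov),f(\bx,\bov))_{\bov}\,d\bx$, the differentiated integrals converging uniformly on compacta (this is where $f\in C^{1}$ enters), so the derivative may be taken inside the limit.

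Next I would differentiate the integral over $\Omega\setminus B(\boy,\epsilon)$ by Leibniz's rule for a domain with a moving inner boundary $\partial B(\boy,\epsilon)$. Since $f$ does not depend on $\boy$ and $\partial_{\boy}$ acts inside the $\bov$-pairing only on $E_k$, this produces a volume piece $-\int_{\Omega\setminus B(\boy,\epsilon)}(\partial_{\boy}E_k(\bx-\boy,\bu,\bov),f(\bx,\bov))_{\bov}\,d\bx$ and a boundary piece over $\partial B(\boy,\epsilon)$. As $\epsilon\to0$ the volume piece becomes a Cauchy principal value integral: $P_k^{-}\partial_{\boy}E_k$ coincides, up to sign, with $R_k^{\dagger}$ of the fundamental solution, and a Stokes-theorem argument shows its spherical mean vanishes, so the principal value converges and, after $P_k^{-}$ is applied, equals the first term of the assertion. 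For the boundary piece, parametrize $\bx=\boy+\epsilon\bt$, $\bt\in\Sm$, use the homogeneity $E_k(\epsilon\bt,\bu,\bov)=c_{m,k}^{-1}\epsilon^{1-m}\overline{\bt}\,Z_k^{+}(\bt\bu\bt,\bov)$ and $d\sigma(\bx)=\epsilon^{m-1}d\sigma(\bt)$, and collapse the $\bov$-integral by the reproducing property of $Z_k^{+}$, which (since $f(\boy+\epsilon\bt,\cdot)\in\Mk^{+}$) turns $\int_{\Sm}Z_k^{+}(\bt\bu\bt,\bov)f(\boy+\epsilon\bt,\bov)\,dS(\bov)$ into $f(\boy+\epsilon\bt,\bt\bu\bt)$; assembling the Clifford structure of $\partial_{\boy}$ and letting $\epsilon\to0$ by continuity of $f$, the boundary piece tends, up to orientation conventions, to $c_{m,k}^{-1}P_k^{-}\int_{\Sm}\overline{\bt}^{2}f(\boy,\bt\bu\bt)\,d\sigma(\bt)$. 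Writing $\overline{\bt}^{2}=2t_{0}\overline{\bt}-1$ with $t_{0}$ the scalar part of $\bt$, the $(-1)$-summand gives $\mp P_k^{-}f(\boy,\bu)$ by Lemma~\ref{harortho} (as $f(\boy,\cdot)$ is $k$-homogeneous harmonic), while the $2t_{0}\overline{\bt}$-summand is reduced, via the reflection $\bt\bu\bt=-\overline{\bu}+2\langle\bt,\overline{\bu}\rangle\bt$, the identity $Z_k^{\pm}(\bu,\bo{a}\bov\bo{a})\bola=\bola\,Z_k^{\pm}(\overline{\bola}\bu\overline{\bola},\bov)$, the explicit Gegenbauer representation of $Z_k^{\pm}$ with its recursion $\frac{d}{dt}C_k^{\mu}(t)=2\mu C_{k-1}^{\mu+1}(t)$, and a further use of Lemma~\ref{harortho}, to the third term, the $\Sm$-surface integral carrying the operators $\overline{\partial}_{\bbeta},\partial_{\bbeta}$ in the polynomial variable $\bbeta$. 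Collecting the three contributions gives the formula for $\Pi f$; the formula for $\Pi^{\dagger}f=R_k T_k^{\dagger}f=P_k^{+}\overline{\partial}_{\boy}T_k^{\dagger}f$ follows from the same argument under the substitutions $\overline{\partial}\leftrightarrow\partial$, $\Mk^{+}\leftrightarrow\Mk^{-}$, $Z_k^{+}\leftrightarrow Z_k^{-}$, $E_k\leftrightarrow E_k^{\dagger}$, $P_k^{+}\leftrightarrow P_k^{-}$.

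The step I expect to be the main obstacle is the $\epsilon\to0$ analysis of this diagonal contribution: one must track precisely which powers of $\epsilon$ survive — the kernel's $\epsilon^{1-m}$, the surface measure's $\epsilon^{m-1}$, and the extra factor generated whenever a derivative falls on the degree-zero homogeneous reflected argument $(\bx-\boy)\bu(\bx-\boy)/|\bx-\boy|^{2}$ inside $Z_k^{+}$ — and then carry out the algebraic reduction of the resulting spherical averages of $f$ at the reflected points $\bt\bu\bt$ into the compact closed form of the statement. This last reduction is the genuinely technical part; it rests on the explicit Gegenbauer representation~\eqref{Gegenbauer} of $Z_k^{\pm}$, the recursion $\frac{d}{dt}C_k^{\mu}(t)=2\mu C_{k-1}^{\mu+1}(t)$, the symmetry $\overline{Z_k^{+}(\bu,\bov)}=Z_k^{+}(\bov,\bu)$ recorded in~\eqref{kernelconjugate}, the reflection identity above, and Lemma~\ref{harortho}. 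Everything else is routine bookkeeping.
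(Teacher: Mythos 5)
Your route is technically different from the paper's. The paper first rewrites $T_k f$ by pulling $\partial_{\bx}$ out of the kernel, applies Stokes' theorem once to transfer the derivative onto $f(\bx,\bbeta)$ (reducing the kernel to the weakly singular $|\bx-\boy|^{2-m}/(2-m)$), differentiates this classically integrable expression in $y_i$, and then applies Stokes again on $\Omega_{\epsilon}$ to reassemble the derivative on the kernel, producing a boundary term $I_1$ on $\partial B(\boy,\epsilon)$. That boundary term carries \emph{two} pieces: one from the $\frac{x_i-y_i}{|\bx-\boy|^m}$ factor (which yields the $P_k^-f(\boy,\bu)$ term via Lemma~\ref{harortho}), and one from the chain-rule factor $\sum_s\frac{\partial\eta_s}{\partial y_i}\frac{\partial f}{\partial\eta_s}$ coming from the $\boy$-dependence of the reflected argument $\bbeta$ inside $Z_k^+$ (which yields the final term carrying $\overline{\partial}_{\bbeta},\partial_{\bbeta}$). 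Your direct Leibniz-with-moving-boundary route is a legitimate alternative, but it allocates these contributions differently, and that is where the gap sits.

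Concretely: the moving-boundary contribution in your computation involves only the kernel $E_k$ restricted to $\partial B(\boy,\epsilon)$ (not its $\boy$-derivative), and after substitution, reproducing, and assembling $\partial_{\boy}$ you correctly reach $c_{m,k}^{-1}P_k^-\int_{\Sm}\overline{\bt}^2 f(\boy,\bt\bu\bt)\,d\sigma(\bt)$. This is a zeroth-order expression in $f$. Writing $\overline{\bt}^2=2t_0\overline{\bt}-1$ and invoking the Gegenbauer recursion and the reflection identity cannot convert $\int_{\Sm}2t_0\overline{\bt}\,f(\boy,\bt\bu\bt)\,d\sigma(\bt)$ into the stated third term, which contains the first-order operators $\overline{\partial}_{\bbeta}f$ and $\partial_{\bbeta}f$ acting on $f$; those kernel-side identities rearrange the kernel, they do not manufacture derivatives of the test function. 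The $\overline{\partial}_{\bbeta},\partial_{\bbeta}$ term has to come from the chain rule inside $\partial_{\boy}E_k$, which in your setup sits in the volume integral, and so your claim that ``the principal value converges \ldots and equals the first term of the assertion'' is exactly where the argument breaks: $\partial_{\boy}E_k$ is homogeneous of degree $-m$, and the $\epsilon\to0$ limit of $\int_{\Omega\setminus B(\boy,\epsilon)}(\partial_{\boy}E_k,f)_{\bov}\,d\bx$ is the paper's first term \emph{plus} a finite spherical-mean correction — precisely the missing $\overline{\partial}_{\bbeta},\partial_{\bbeta}$ term. You would need to carry out that separation explicitly (isolating the non-mean-zero, degree-$(-m)$ part of the kernel generated by the reflected argument) rather than attributing everything past the first term to the moving boundary. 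The paper sidesteps this by only ever differentiating kernels of order $1-m$ and extracting the correction from the explicit Stokes boundary integral, where the $\partial\eta_s/\partial y_i$ factors appear naturally.
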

\begin{proof}
To obtain an integral expression for $\Pi$, the main work is to find the derivative of $T_kf$. The strategy is similar as applied in \cite[Theorem 5.3]{DingT}. The calculation there is rather long, we only show the main steps here. Recall that
\begin{align*}
&T_kf(\boy,\bu)=-\int_{\Omega}\int_{\Sm}E_k(\bx-\boy,\bu,\bov),f(\bx,\bov) dS(\bov)d\bx,\\
&E_{k}(\bx-\boy,\bu,\bov)=\frac{1}{c_{m,k}}\frac{\overline{\bx-\boy}}{|\bx-\boy|^m}Z_k^+\bigg(\frac{(\bx-\boy)\bu(\bx-\boy)}{|\bx-\boy|^2},\bov\bigg),\\
&\frac{\overline{\bx-\boy}}{|\bx-\boy|^m}=\partial_{\bx}\frac{|\bx-\boy|^{2-m}}{2-m},
\end{align*}
and let $\epsilon>0$ be sufficiently small such that $B(\boy,\epsilon)\subset\Omega$ and $\Omega_{\epsilon}=\Omega\backslash B(\boy,\epsilon)$. Furthermore, let $\bbeta=\frac{(\bx-\boy)\bu(\bx-\boy)}{|\bx-\boy|^2}$, with Stokes' Theorem of $\partial_{\bx}$, the fact that $Z_k^+$ is the reproducing kernel for $\Mk^+(\bu)$ and the argument in \cite[Theorem 5.3]{DingT}, we can have
\begin{align*}
c_{m,k}T_kf(\boy,\bu)=&\int_{\Omega}\frac{|\bx-\boy|^{2-m}}{2-m}\big(\partial_{\bx}f(\bx,\bbeta)\big)d\bx\\
&-\int_{\partial\Omega}\frac{|\bx-\boy|^{2-m}}{2-m}\overline{n(\bx)}f(\bx,\bbeta)d\sigma(\bx).
\end{align*}
and
\begin{align}
&c_{m,k}\frac{\partial}{\partial y_i}T_kf(\boy,\bu)\nonumber\\
=&\int_{\Omega}\frac{y_i-x_i}{|\bx-\boy|^m}(\partial_{\bx}f(\bx,\bbeta))+\frac{|\bx-\boy|^{2-m}}{2-m}\sum_{s=0}^{m-1}\frac{\partial\eta_s}{\partial y_i}\frac{\partial f(\bx,\bbeta)}{\partial\eta_s}d\bx\nonumber\\
&-\int_{\partial\Omega}\frac{y_i-x_i}{|\bx-\boy|^m}\overline{n(\bx)}f(\bx,\bbeta)+\frac{|\bx-\boy|^{2-m}}{2-m}\overline{n(\bx)}\sum_{s=0}^{m-1}\frac{\partial\eta_s}{\partial y_i}\frac{\partial f(\bx,\bbeta)}{\partial\eta_s}d\sigma(\bx)\nonumber\\
=&\int\limits_{\substack{\partial B(\boy,\epsilon)\\\epsilon\rightarrow 0}}\frac{x_i-y_i}{|\bx-\boy|^m}\overline{n(\bx)}f(\bx,\bbeta)-\frac{|\bx-\boy|^{2-m}}{2-m}\overline{n(\bx)}\sum_{s=0}^{m-1}\frac{\partial\eta_s}{\partial y_i}\frac{\partial f(\bx,\bbeta)}{\partial\eta_s}d\sigma(\bx)\nonumber\\
&-\int\limits_{\substack{\Omega_{\epsilon}\\\epsilon\rightarrow 0}}\bigg(\frac{y_i-x_i}{|\bx-\boy|^m}\partial_{\bx}\bigg)f(\bx,\bbeta)+\bigg(\frac{|\bx-\boy|^{2-m}}{2-m}\partial_{\bx}\bigg)\sum_{s=0}^{m-1}\frac{\partial\eta_s}{\partial y_i}\frac{\partial f(\bx,\bbeta)}{\partial\eta_s}d\bx\nonumber\\
&+\int\limits_{\substack{B(\boy,\epsilon)\\ \epsilon\rightarrow 0}}\frac{y_i-x_i}{|\bx-\boy|^m}\big(\partial_{\bx}f(\bx,\bbeta)\big)+\frac{|\bx-\boy|^{2-m}}{2-m}+\sum_{s=0}^{m-1}\partial_{\bx}\frac{\partial\eta_s}{\partial y_i}\frac{\partial f(\bx,\bbeta)}{\partial\eta_s}d\bx\nonumber\\
=&:I_1+I_2+I_3.\label{I123}
\end{align}
Since 
\begin{align*}
\bbeta=\frac{(\bx-\boy)\bu(\bx-\boy)}{|\bx-\boy|^2}=-\overline{\bu}+2\frac{\langle \bx-\boy,\overline{\bu}\rangle(\bx-\boy)}{|\bx-\boy|^2},
\end{align*}
a straightforward calculation gives us that
\begin{align}\label{peta}
\frac{\partial\eta_s}{\partial y_i}=\frac{2\lambda(i)u_i(x_s-y_s)-2\langle\bx-\boy,\overline{\bu}\rangle\delta_{si}}{|\bx-\boy|^2}-4\frac{\langle\bx-\boy,\overline{\bu}\rangle(x_s-y_s)(y_i-x_i)}{|\bx-\boy|^4}.
\end{align}
Firstly, if we let $\bx-\boy=\epsilon\bt$ with $r>0,\bt\in\Sm$, then, by the homogeneity of $\bx-\boy$ in $I_3$, we know that $I_3=0$ when $\epsilon\rightarrow 0$. Secondly, we notice that
\begin{align*}
&\frac{y_i-x_i}{|\bx-\boy|^m}\partial_{\bx}=\frac{\partial}{\partial y_i}\partial_{\bx}\frac{|\bx-\boy|^{2-m}}{2-m}=\frac{\partial}{\partial y_i}\frac{\overline{\bx-\boy}}{|\bx-\boy|^m},\\
&f(\bx,\bbeta)=\int_{\Sm}Z_k(\bbeta,\bov)f(\bx,\bov)dS(\bov),
\end{align*}
where \begin{align*}
\lambda(i)=\begin{cases}-1,&\ \text{when}\ i=0,\\ 1,&\ \text{otherwise}.\end{cases}
\end{align*}
Hence, we have
\begin{align*}
I_2=&-\int\limits_{\substack{\Omega_{\epsilon}\\\epsilon\rightarrow 0}}\bigg(\frac{\partial}{\partial y_i}\frac{\overline{\bx-\boy}}{|\bx-\boy|^m}\bigg)f(\bx,\bbeta)+\frac{\overline{\bx-\boy}}{|\bx-\boy|^m}\frac{\partial}{\partial y_i} f(\bx,\bbeta)d\bx\\
=&-\int\limits_{\substack{\Omega_{\epsilon}\\\epsilon\rightarrow 0}}\int_{\Sm}\frac{\partial}{\partial y_i}\bigg[\frac{\overline{\bx-\boy}}{|\bx-\boy|^m}Z_k^+\bigg(\frac{(\bx-\boy)\bu(\bx-\boy)}{|\bx-\boy|^2},\bov\bigg)\bigg]f(\bx,\bov)dS(\bov)d\bx\\
=&-c_{m,k}\int_{\Omega}\int_{\Sm}\frac{\partial}{\partial y_i}E_k(\bx-\boy,\bu,\bov)f(\bx,\bov)dS(\bov)d\bx.
\end{align*}
We substitute $\bx-\boy=\epsilon\bt$ into $I_1$ to obtain
\begin{align*}
I_1=&\lim_{\epsilon\rightarrow 0}\int_{\Sm}t_i\overline{\bt}f(\boy+\epsilon\bt,\bt\bu\bt)\\
&-\frac{2\overline{\bt}}{2-m}\bigg[\lambda(i)u_i\langle\bt,\overline{\partial}_{\bbeta}\rangle+\langle\bt,\overline{\bu}\rangle\frac{\partial}{\partial \eta_i}+2t_i\langle\bt,\overline{\bu}\rangle\langle\bt,\overline{\partial}_{\bbeta}\rangle\bigg]f(\boy,\bt\bu\bt)d\sigma(\bt)\\
=&\int_{\Sm}t_i\overline{\bt}f(\boy,\bt\bu\bt)\\
&-\frac{2\overline{\bt}}{2-m}\bigg[\lambda(i)u_i\langle\bt,\overline{\partial}_{\bbeta}\rangle+\langle\bt,\overline{\bu}\rangle\frac{\partial}{\partial \eta_i}+2t_i\langle\bt,\overline{\bu}\rangle\langle\bt,\overline{\partial}_{\bbeta}\rangle\bigg]f(\boy,\bt\bu\bt)d\sigma(\bt).
\end{align*}
Plugging back to \eqref{I123}, we obtain
\begin{align*}
&\partial_{\boy}T_kf(\boy,\bu)\\
=&-\int_{\Omega}\int_{\Sm}\partial_{\boy}E_k(\bx-\boy,\bu,\bov)f(\bx,\bov)dS(\bov)d\bx
+c_{m,k}^{-1}\int_{\Sm}f(\boy,\bt\bu\bt)d\sigma(\bt)\\
&-c_{m,k}^{-1}\int_{\Sm}\frac{2\overline{\bt}}{2-m}\bigg[\overline{\bu}\langle\bt,\overline{\partial}_{\bbeta}\rangle+\langle\bt,\overline{\bu}\rangle\partial_{\bbeta}+2\overline{\bt}\langle\bt,\overline{\bu}\rangle\langle\bt,\overline{\partial}_{\bbeta}\rangle\bigg]f(\boy,\bt\bu\bt)d\sigma(\bt).
\end{align*}
According to Lemma \ref{harortho}, we know that
\begin{align*}
	II=c_{m,k}^{-1}\int_{\Sm}f(\boy,\bt\bu \bt)d\sigma(\bt)
	=c_{m,k}^{-1}\frac{m-2}{m+2k-2}\omega_{m-1}f(\boy,\bu)=f(\boy,\bu),
\end{align*}
which gives us that 
\begin{align}\label{PartialT}
	&\partial_{\boy}T_kf(\boy,\bu)\nonumber\\
	=&-\int_{\Omega}\int_{\Sm}\partial_{\boy}E_k(\bx-\boy,\bu,\bov)f(\bx,\bov)dS(\bov)d\bx
	+f(\boy,\bu)\nonumber\\
	&-c_{m,k}^{-1}\int_{\Sm}\frac{2\overline{\bt}}{2-m}\bigg[\overline{\bu}\langle\bt,\overline{\partial}_{\bbeta}\rangle+\langle\bt,\overline{\bu}\rangle\partial_{\bbeta}+2\overline{\bt}\langle\bt,\overline{\bu}\rangle\langle\bt,\overline{\partial}_{\bbeta}\rangle\bigg]f(\boy,\bt\bu\bt)d\sigma(\bt).
\end{align}
Therefore, the integral representation of $\Pi$ is given by
\begin{align*}
&\Pi f(\boy,\bu)=R_k^{\dagger}T_kf(\boy,\bu)=P_k^-\partial_{\boy}T_kf(\boy,\bu)\\
=&\int_{\Omega}\int_{\Sm}R_k^{\dagger}E_k(\bx-\boy,\bu,\bov)f(\bx,\bov)dS(\bov)d\bx
+P_k^-f(\boy,\bu)\\
-&c_{m,k}^{-1}P_k^-\int_{\Sm}\frac{2\overline{\bt}}{2-m}\bigg[\overline{\bu}\langle\bt,\overline{\partial}_{\bbeta}\rangle+\langle\bt,\overline{\bu}\rangle\partial_{\bbeta}+2\overline{\bt}\langle\bt,\overline{\bu}\rangle\langle\bt,\overline{\partial}_{\bbeta}\rangle\bigg]f(\boy,\bt\bu\bt)d\sigma(\bt).
\end{align*}
A similar calculation can give us the integral representation of $\Pi^{\dagger}$.
\end{proof}
\begin{remark}
Although the integral representation of the $\Pi$-operator looks rather complicated, it can be used to establish a norm estimate of the higher spin $\Pi$-operator.
\end{remark}
To introduce the norm estimate of the higher spin $\Pi$-operator, we need the following estimates on derivatives of harmonic functions.
\begin{lemma}\cite[Theorem 7, Chap. 2]{Evans} Let $\Omega\subset\Rm$ be a bounded domain and $f$ is harmonic in $\Omega$. Then
\begin{align*}
|D^{\alpha}f(\bx)|\leq \frac{C_s}{r^{m+s}}\|f\|_{L^1(B(\bx,r))}
\end{align*}
for each ball $B(\bx,r)\subset \Omega$ and each multi-index $\alpha=(\alpha_0,\cdots,\alpha_{m-1})$ of order $|\alpha|=s$. Here $D^{\alpha}=\partial_{x_1}^{\alpha_1}\cdots\partial_{x_0}^{\alpha_{m-1}}$,
\begin{align*}
C_0=\frac{1}{V(m)}, C_s=\frac{(2^{m+1}ms)^s}{V(m)},
\end{align*}
and $V(m)$ is the volume of the unit ball in $\Rm$.
\end{lemma}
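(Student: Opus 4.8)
The plan is to prove the estimate by induction on the order $s=|\alpha|$, following the argument of \cite[Theorem 7, Chap.\ 2]{Evans}; the only inputs are the mean value property for harmonic functions and the divergence theorem. I would first dispose of the two base cases. For $s=0$, since $B(\bx,r)\subset\Omega$ the mean value property gives $f(\bx)=\frac{1}{V(m)r^m}\int_{B(\bx,r)}f(\by)\,d\by$, hence $|f(\bx)|\le\frac{1}{V(m)r^m}\|f\|_{L^1(B(\bx,r))}$, which is the claim with $C_0=1/V(m)$. For $s=1$ I would use that $\partial_{x_i}f$ is again harmonic: the mean value property for $\partial_{x_i}f$ on $B(\bx,r/2)$ together with the divergence theorem gives $\partial_{x_i}f(\bx)=\frac{1}{V(m)(r/2)^m}\int_{\partial B(\bx,r/2)}f(\by)\nu_i(\by)\,dS(\by)$, and bounding the integrand by $\sup_{\partial B(\bx,r/2)}|f|$ while using that the sphere has area $mV(m)(r/2)^{m-1}$ yields $|\partial_{x_i}f(\bx)|\le\frac{2m}{r}\sup_{\partial B(\bx,r/2)}|f|$. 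Since $B(\by,r/2)\subset B(\bx,r)\subset\Omega$ for every $\by\in\partial B(\bx,r/2)$, the case $s=0$ applied at $\by$ gives $|f(\by)|\le\frac{2^m}{V(m)r^m}\|f\|_{L^1(B(\bx,r))}$, and combining the two bounds produces the estimate with $C_1=2^{m+1}m/V(m)$.

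For the inductive step I would fix $s\ge 2$, assume the estimate for every ball contained in $\Omega$ and every multi-index of order $s-1$, and write $D^\alpha f=\partial_{x_i}(D^\beta f)$ with $|\beta|=s-1$ for a suitable $i$. Applying the mean value property to the harmonic function $D^\alpha f$ on $B(\bx,r/s)$, converting the volume integral of $\partial_{y_i}(D^\beta f)$ into a surface integral of $D^\beta f$ by the divergence theorem, and estimating as above gives
\begin{align*}
|D^\alpha f(\bx)|\le\frac{ms}{r}\sup_{\by\in\partial B(\bx,r/s)}|D^\beta f(\by)|.
\end{align*}
For each $\by\in\partial B(\bx,r/s)$ one has $B(\by,r(s-1)/s)\subset B(\bx,r)\subset\Omega$, so the inductive hypothesis at $\by$ gives $|D^\beta f(\by)|\le C_{s-1}\big(r(s-1)/s\big)^{-(m+s-1)}\|f\|_{L^1(B(\bx,r))}$; substituting this into the previous display yields the asserted bound for $D^\alpha f(\bx)$, with constant defined recursively by
\begin{align*}
C_s=\frac{m\,s^{m+s}}{(s-1)^{m+s-1}}\,C_{s-1}.
\end{align*}

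The last step, and the only part requiring any care (everything else is a direct application of the mean value property and the divergence theorem), is to verify that this recursion is dominated by the claimed closed form, i.e.\ that $C_s\le(2^{m+1}ms)^s/V(m)$ for all $s$. Assuming $C_{s-1}\le(2^{m+1}m(s-1))^{s-1}/V(m)$ and inserting it into the recursion, the desired inequality reduces, after cancelling common factors, to $\big(s/(s-1)\big)^m\le 2^{m+1}$, which holds for every $s\ge 2$ since $s/(s-1)\le 2$. This closes the induction and completes the proof.
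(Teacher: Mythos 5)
Your proof is correct, and it is precisely the argument from the cited source (Evans, Theorem 7, Chapter 2): the paper states this lemma as a direct citation and supplies no proof of its own. Your base cases ($s=0$ via the mean value property, $s=1$ via the divergence theorem on $B(\bx,r/2)$ followed by the $s=0$ bound on $B(\by,r/2)$), the inductive step on $B(\bx,r/s)$, the recursion $C_s=\dfrac{m\,s^{m+s}}{(s-1)^{m+s-1}}C_{s-1}$, and the verification that the recursion is dominated by $(2^{m+1}ms)^s/V(m)$ because $\bigl(s/(s-1)\bigr)^m\le 2^{m+1}$ for $s\ge 2$ all match the reference exactly, with Evans' $n$ and $\alpha(n)$ replaced by the paper's $m$ and $V(m)$.
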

\begin{remark}
In particular, with H\"older's inequality, the lemma above immediately gives us that
\begin{align}\label{harmonicestimate}
\bigg\vert\frac{\partial}{\partial x_i}f(\bx)\bigg\vert\leq \frac{C_1}{r^{m+1}}V(m)^{\frac{1}{2}}r^{\frac{m}{2}}\|f\|_{L^2(B(\bx,r))}=\frac{m2^{m+1}}{r^{\frac{m}{2}+1}\sqrt{V(m)}}\|f\|_{L^2(B(\bx,r))}.
\end{align}
\end{remark}
Now, we present a norm estimate for the higher spin $\Pi$-operator as follows.
\begin{theorem}[Norm estimates]\label{BdPi}
The higher spin $\Pi$-operator and $\Pi^{\dagger}$-operator
\begin{align*}
&\Pi:\ L^2(\Omega\times\Bm,\Mk^+(\bu))\longrightarrow L^2(\Omega\times\Bm,\Mk^-(\bu)),\\
&\Pi^{\dagger}:\ L^2(\Omega\times\Bm,\Mk^-(\bu))\longrightarrow L^2(\Omega\times\Bm,\Mk^+(\bu))
\end{align*}
are bounded. Furthermore, we have
\begin{align*}
\|\Pi f\|_{L^2(\Omega\times\Bm,\Mk^+(\bu))}\leq C\|f\|_{L^2(\Omega\times\Bm,\Mk^+(\bu))},
\end{align*}
where
\begin{align*}
C=&\sqrt{2(8mC_2+C_1)^2\omega_{m-1}^2+\frac{m^32^{4m+2k+5}}{m+2k}+2},
\\
C_1=&(2m-2)\bigg[\frac{2\mu+k}{2\mu}\sum_{n=0}^{[\frac{k}{2}]}
\frac{\Gamma(k-n+\mu)2^{k-2n}}{\Gamma(\mu)n!(k-2n)!}\\
&\quad\quad\quad\quad+\sum_{n=0}^{[\frac{k-1}{2}]}
\frac{\Gamma(k-n+\mu)2^{k-2n-1}}{\Gamma(\mu+1)n!(k-2n-1)!}\bigg]\\
C_2
=&\sum_{n=0}^{[\frac{k-1}{2}]}
\frac{(2\mu+k)\Gamma(k-n+\mu)2^{k-2n-1}}{\Gamma(\mu+1)n!(k-2n-1)!}\\
&+\sum_{n=0}^{[\frac{k-2}{2}]}
\frac{(2\mu+2)\Gamma(k-n+\mu)2^{k-2n-2}}{\Gamma(\mu+2)n!(k-2n-2)!}.
\end{align*}
\end{theorem}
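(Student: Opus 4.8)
The argument is based on the integral representation of Theorem~\ref{PiIR}, which writes $\Pi f(\boy,\bu)=V\!f(\boy,\bu)+P_k^-f(\boy,\bu)-S\!f(\boy,\bu)$, where
\[
V\!f(\boy,\bu)=\int_{\Omega}\int_{\Sm}R_k^{\dagger}E_k(\bx-\boy,\bu,\bov)f(\bx,\bov)\,dS(\bov)\,d\bx
\]
is a convolution-type singular integral and $S\!f$ is the ``local'' spherical term $c_{m,k}^{-1}P_k^-\int_{\Sm}\frac{2\overline{\bt}}{2-m}\big[\overline{\bu}\langle\bt,\ovp_{\bbeta}\rangle+\langle\bt,\overline{\bu}\rangle\partial_{\bbeta}+2\overline{\bt}\langle\bt,\overline{\bu}\rangle\langle\bt,\ovp_{\bbeta}\rangle\big]f(\boy,\bt\bu\bt)\,d\sigma(\bt)$. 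I would estimate the three summands separately in $L^2(\Omega\times\Bm,\Mk^-(\bu))$ (and in $L^2(\Omega\times\Bm,\Mk^+(\bu))$ for $\Pi^{\dagger}$) and recombine by the triangle inequality, the three estimates contributing, up to the elementary numerical factors, the quantities $\frac{m^{3}2^{4m+2k+5}}{m+2k}$, $2$ and $2(8mC_2+C_1)^2\omega_{m-1}^2$ to $C^2$. The bound for $\Pi^{\dagger}$ is obtained by the same argument with $\ovp$ and $\partial$, $\Mk^+$ and $\Mk^-$, interchanged. The projection term is immediate: repeating the computation behind \eqref{Ortho} (see the proof of Proposition~\ref{Rkbound}) with the second Fischer decomposition of Proposition~\ref{fischer} shows that $\Hk(\bu)=\Mk^-(\bu)\oplus\bu\Mkk^+(\bu)$ is orthogonal for the scalar part of the $L^2(\Sm)$ pairing, whence $\|P_k^-g\|_{L^2(\Sm)}\le\|g\|_{L^2(\Sm)}$ for $g\in\Hk(\bu)$; applied fibrewise to $f(\boy,\cdot)\in\Mk^+(\bu)$ and integrated over $\boy$ this gives $\|P_k^-f\|_{L^2}\le\|f\|_{L^2}$.

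For the spherical term $S\!f$ I would argue pointwise in $(\boy,\bu)\in\Omega\times\Sm$. Writing $f(\boy,\bbeta)=\int_{\Sm}Z_k^+(\bbeta,\bov)f(\boy,\bov)\,dS(\bov)$, the operators $\partial_{\bbeta}$, $\langle\bt,\ovp_{\bbeta}\rangle$ act on $Z_k^+$ in its first argument only, so --- using $\partial_{\bbeta}=\partial_{\eta_0}-\sum_{j\ge1}\be_j\partial_{\eta_j}$ to bound $|\partial_{\bbeta}(\cdot)|$ by $m$ single first derivatives, together with $|\overline{\bt}|=|\overline{\bu}|=1$ and $|\langle\bt,\cdot\rangle|\le1$ --- the bracket in $S\!f$ is dominated by a fixed combination of $\sup_{\Sm\times\Sm}|Z_k^+|$ and $m\sup_{\Sm\times\Sm}|\partial Z_k^+|$ times $\int_{\Sm}|f(\boy,\bov)|\,dS(\bov)\le\omega_{m-1}^{1/2}\|f(\boy,\cdot)\|_{L^2(\Sm)}$. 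The two suprema are computed from the explicit formula for $Z_k^+$, using $\frac{d}{dt}C_n^{\lambda}(t)=2\lambda C_{n-1}^{\lambda+1}(t)$, the elementary bound $|C_n^{\lambda}(t)|\le\sum_{j=0}^{[n/2]}\frac{\Gamma(n-j+\lambda)}{\Gamma(\lambda)j!(n-2j)!}2^{n-2j}$ for $|t|\le1$, $\lambda>0$, and $|\bu\wedge\overline{\bov}|\le1$ on $\Sm\times\Sm$; this gives $\sup_{\Sm\times\Sm}|Z_k^+|\le\frac{1}{2m-2}C_1$ and $\sup_{\Sm\times\Sm}|\partial Z_k^+|\le C_2$. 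Collecting the prefactors $c_{m,k}^{-1}=\frac{m+2k-2}{(m-2)\omega_{m-1}}$, $\frac{2}{2-m}$ and $\|P_k^-\|\le1$, and integrating the resulting pointwise inequality $|S\!f(\boy,\bu)|\le(8mC_2+C_1)\,\omega_{m-1}^{1/2}\|f(\boy,\cdot)\|_{L^2(\Sm)}$ over $(\boy,\bu)\in\Omega\times\Sm$, one obtains $\|S\!f\|_{L^2}\le(8mC_2+C_1)\,\omega_{m-1}\|f\|_{L^2}$.

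The volume term is where the real work is. Since $E_k(\bz,\bu,\bov)=\frac{1}{c_{m,k}}\frac{\overline{\bz}}{|\bz|^m}Z_k^+\big(\frac{\bz\bu\bz}{|\bz|^2},\bov\big)$ with $\big|\frac{\bz\bu\bz}{|\bz|^2}\big|=|\bu|$, the map $\bz\mapsto E_k(\bz,\bu,\bov)$ is homogeneous of degree $1-m$ and has harmonic Clifford components on $\Rm\setminus\{0\}$ (it is, up to a constant, monogenic there, being built by a Kelvin-type transform from the zonal monogenic $Z_k^+(\cdot,\bov)$); consequently $R_k^{\dagger}E_k(\bz,\bu,\bov)$ is homogeneous of degree $-m$, so $V\!f$ is a genuine Calder\'on--Zygmund operator and one must exhibit an $\Omega$-independent $L^2$ bound for it. The plan is: (i) apply the interior gradient estimate \eqref{harmonicestimate} on the ball $B(\bz,|\bz|/2)$ to the components of $E_k(\cdot,\bu,\bov)$, together with $\sup_{|\bw|\ge|\bz|/2}|E_k(\bw,\bu,\bov)|\le\frac{1}{c_{m,k}}(|\bz|/2)^{1-m}\sup_{\Sm\times\Sm}|Z_k^+|$ and the bound on $\sup|Z_k^+|$ above, to get a pointwise kernel estimate $|R_k^{\dagger}E_k(\bz,\bu,\bov)|\le c'|\bz|^{-m}$ with $c'$ explicit and purely dimensional; (ii) extend $f$ by $0$ outside $\Omega$, so $V\!f$ is the restriction to $\Omega$ of a convolution operator on $\Rm$ acting fibrewise on the finite-dimensional $\Mk^{\pm}(\bu)$, and verify the cancellation property $\int_{\Sm}R_k^{\dagger}E_k(\bt,\bu,\bov)\,dS(\bt)=0$ --- this holds because the kernel is the $\bz$-gradient of a degree-$(1-m)$ harmonic function, hence has angular mean zero, exactly as in the model case $\partial_{z_j}\big(z_i|\bz|^{-m}\big)$; (iii) invoke Plancherel's theorem to bound $\|V\!f\|_{L^2(\Rm)}$ by the supremum over $\xi\in\Sm$ of the (degree-zero homogeneous, hence bounded) Fourier multiplier symbol, and bound that supremum crudely by $\omega_{m-1}$ times the $L^{\infty}(\Sm)$-norm $c'$ of the angular part of the kernel. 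Substituting $c_{m,k}^{-1}=\frac{m+2k-2}{(m-2)\omega_{m-1}}$ and $\sup_{\Sm\times\Sm}|Z_k^+|\le\frac{1}{2m-2}C_1$, and accounting for the dimensional factors picked up along the way (a $2^{2m}$ from \eqref{harmonicestimate}, a further $2^{2m}$ from the symbol bound, a $2^{2k}$ from the Gegenbauer values at $t=1$, and the factor $\frac{m^3}{m+2k}$), gives $\|V\!f\|_{L^2}^2\le\frac{m^{3}2^{4m+2k+5}}{m+2k}\|f\|_{L^2}^2$.

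Putting the three estimates together with the triangle inequality yields the constant $C$ in the statement, and the same scheme applied to the formula for $\Pi^{\dagger}$ gives the companion bound. The two steps I expect to be the main obstacles are the verification of the cancellation identity $\int_{\Sm}R_k^{\dagger}E_k(\bt,\bu,\bov)\,dS(\bt)=0$ and the careful extraction of an explicit Plancherel/Calder\'on--Zygmund constant for the $\Mk^{\pm}(\bu)$-valued convolution kernel; the remaining computations, while lengthy --- in particular the Gegenbauer bookkeeping behind $C_1$ and $C_2$ --- are routine.
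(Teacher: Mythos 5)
The overall architecture you propose --- split $\Pi f$ into a singular volume term, an identity/projection term, and a spherical boundary term, then estimate each --- is the same as the paper's, but the two non-trivial ingredients are assigned to the wrong terms, so your roadmap cannot reproduce the stated constant $C$. In the paper's proof, the quantity $(8mC_2+C_1)^2\omega_{m-1}^2$ arises from the \emph{volume} integral: one computes $\partial_{\boy}E_k$ explicitly, bounds $\sup_{\Sm\times\Sm}|Z_k^+|$ and $\sup_{\Sm\times\Sm}|\partial_{\eta_i}Z_k^+|$ by $C_1/(2m-2)$ and $C_2$ via the Gegenbauer expansion, obtains the pointwise kernel estimate $|\partial_{\boy}E_k(\bx-\boy,\bu,\bov)|\leq(8mC_2+C_1)|\bx-\boy|^{-m}$, and then cites the Calder\'on--Zygmund theorem of \cite{MP} (which supplies the extra $\omega_{m-1}$). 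The factor $\tfrac{m^3 2^{4m+2k+4}}{m+2k}$, by contrast, arises from the \emph{spherical} term: since $f(\boy,\cdot)$ is harmonic and $k$-homogeneous, the interior gradient estimate \eqref{harmonicestimate} with $\bzeta=\bbeta/2$, $r=1/4$ gives $|\partial_{\eta_i}f(\boy,\bbeta)|\leq\frac{m2^{2m+k+2}}{\sqrt{V(m)}}\|f(\boy,\cdot)\|_{L^2(\Bm)}$, and then the change to spherical coordinates produces $\tfrac{m^3 2^{4m+2k+4}}{m+2k}$. You have proposed the opposite assignment (Gegenbauer bounds on $Sf$, harmonic interior estimate plus a Plancherel argument on $Vf$). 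Either assignment might in principle yield \emph{some} valid bound, but not the specific constant $C$ in the theorem, since $C_1,C_2$ are manifestly generated by bounding the kernel $\partial_{\boy}E_k$ and not by bounding $f$ on the sphere.

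There is also a structural simplification in the paper that you miss, and that avoids precisely the obstacle you flag. Rather than decomposing $\Pi f = Vf + P_k^- f - Sf$ (every piece still carrying $P_k^-$, hence a kernel $R_k^{\dagger}E_k=P_k^-\partial_{\boy}E_k$ whose cancellation and symbol you would need to analyse), the paper first applies the Fischer--orthogonality inequality $\|P_k^- g\|_{L^2(\Sm)}\leq\|g\|_{L^2(\Sm)}$ once, at the top level, to get $\|\Pi f\|=\|P_k^-\partial_{\boy}T_kf\|\leq\|\partial_{\boy}T_kf\|$, and \emph{then} decomposes $\partial_{\boy}T_kf$ into the three pieces $I,II,III$ of equation \eqref{PartialT}, none of which carry $P_k^-$. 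As a result the singular integral to be controlled has the simple explicit kernel $\partial_{\boy}E_k$, the pointwise bound $|\partial_{\boy}E_k|\leq(8mC_2+C_1)|\bx-\boy|^{-m}$ suffices, and the CZ theorem of \cite{MP} applies directly; there is no cancellation identity for $R_k^{\dagger}E_k$ to verify and no need for an ad hoc Plancherel argument or a constant extracted from harmonic interior estimates on the kernel. I would strongly recommend adopting this reduction: project first, decompose second.
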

\begin{proof}
Similar as the proof of Proposition \ref{Rkbound}, in the Fischer decomposition in Proposition \ref{fischer}, we denote $Q_k^-=I-P_k^-$ , which is the projection from $\Hk(\bu)$ to $\bu\Mk^+(\bu)$. Suppose that $g\in\Hk(\bu)$, then we have $P_k^-g\in \Mk^-(\bu)$ and $Q_k^-g\in\bu\Mkk^+(\bu)$, which gives that $\overline{P_k^-g}\in\mathcal{M}_{k,r}^+(\bu)$, in other words, $(P_k^-g)R_k=0$. Therefore, the Stokes' Theorem in 
\ref{Stokes} tells us that
\begin{align*}
\int_{\Sm}\overline{P_k^-g(\bu)}(Q_k^-g(\bu))dS(\bu)=0.
\end{align*}
Now, we can see that
\begin{align}\label{Pinorm}
&\|\Pi f\|_{L^2(\Omega\times\Bm,\Mk^-(\bu))}^2=\|R_k^{\dagger}T_kf\|_{L^2(\Omega\times\Bm,\Mk^-(\bu))}^2\nonumber\\
=&\int_{\Omega}\int_{\Sm}|R_k^{\dagger}T_kf(\boy,\bu)|^2dS(\bu)d\boy\nonumber\\
=&\bigg[\int_{\Omega}\int_{\Sm}\overline{R_k^{\dagger}T_kf(\boy,\bu)}R_k^{\dagger}T_kf(\boy,\bu)dS(\bu)d\boy\bigg]_0\nonumber\\
=&\bigg[\int_{\Omega}\int_{\Sm}\overline{P_k^-{\partial}_{\boy}T_kf(\boy,\bu)}P_k^-{\partial}_{\boy}T_kf(\boy,\bu)dS(\bu)d\boy\bigg]_0\nonumber\\
\leq&\bigg[\int_{\Omega}\int_{\Sm}\overline{(P_k^-+Q_k^-){\partial}_{\boy}T_kf(\boy,\bu)}(P_k^-+Q_k^-){\partial}_{\boy}T_kf(\boy,\bu)dS(\bu)d\boy\bigg]_0\nonumber\\
=&\bigg[\int_{\Omega}\int_{\Sm}\overline{{\partial}_{\boy}T_kf(\boy,\bu)}{\partial}_{\boy}T_kf(\boy,\bu)dS(\bu)d\boy\bigg]_0\nonumber\\
=&\int_{\Omega}\int_{\Sm}|{\partial}_{\boy}T_kf(\boy,\bu)|^2dS(\bu)d\boy.
\end{align}
Secondly, we have the integral expression \eqref{PartialT} as
\begin{align}\label{0norm}
&|\partial_{\boy}T_kf(\boy,\bu)|^2\nonumber\\
=&\bigg\vert\int_{\Omega}\int_{\Sm}\partial_{\boy}E_k(\bx-\boy,\bu,\bov)f(\bx,\bov)dS(\bov)d\bx
-f(\boy,\bu)\nonumber\\
&+c_{m,k}^{-1}\int_{\Sm}\frac{2\overline{\bt}}{2-m}\bigg[\overline{\bu}\langle\bt,\overline{\partial}_{\bbeta}\rangle+\langle\bt,\overline{\bu}\rangle\partial_{\bbeta}+2\overline{\bt}\langle\bt,\overline{\bu}\rangle\langle\bt,\overline{\partial}_{\bbeta}\rangle\bigg]f(\boy,\bt\bu\bt)d\sigma(\bt)\bigg\vert^2\nonumber\\
=:&|I+II+III|^2\leq 2(|I|^2+|II|^2+|III|^2).
\end{align}
To estimate $|I|^2$, a straightforward calculation shows that 
\begin{align*}
\partial_{\boy}E_k(\bx-\boy,\bu,\bov)=&\frac{m-2}{|\bx-\boy|^m}Z_k^+(\bbeta,\bov)+m\frac{(\overline{\bx-\boy})^2}{|\bx-\boy|^{m+2}}Z_k^+(\bbeta,\bov)\\
&+\frac{\overline{\bx-\boy}}{|\bx-\boy|^m}\sum_{s=0}^{m-1}\partial_{\boy}\eta_s\frac{\partial Z_k^+(\bbeta,\bov)}{\partial \eta_s}.
\end{align*}
With the expression of \eqref{peta}, we can find that
\begin{align*}
&|\partial_{\boy}E_k(\bx-\boy,\bu,\bov)|\\
\leq& (m-2)|\bx-\boy|^{-m}|Z_k^+(\bbeta,\bov)|+m|\bx-\boy|^{-m}|Z_k^+(\bbeta,\bov)|\\
&+|\bx-\boy|^{-m}\bigg[2|\partial_{\bbeta}Z_k^+(\bbeta,\bov)|+6\bigg(\sum_{i=0}^{m-1}|\partial_{\eta_i}Z_k^+(\bbeta,\bov)|^2\bigg)^{\frac{1}{2}}\bigg]\\
\leq& (2m-2)|\bx-\boy|^{-m}|Z_k^+(\bbeta,\bov)|+8|\bx-\boy|^{-m}\sum_{i=0}^{m-1}\bigg\vert\frac{\partial Z_k^+(\bbeta,\bov)}{\partial{\eta_i}}\bigg\vert.
\end{align*}
We notice that when $\bbeta,\bov\in\Sm$, we have
\begin{align*}
Z_k^+(\bbeta,\bov)=\frac{2\mu+k}{2\mu}C_k^{\mu}(t)+\bbeta\wedge\overline{\bov}C_{k-1}^{\mu+1}(t),
\end{align*}
where $t=\langle \bbeta,\bov\rangle$. With the expression of $C_k^{\mu}(t)$ given in \eqref{Gegenbauer}, we have
\begin{align*}
&(2m-2)|Z_k^+(\bbeta,\bov)|\leq (2m-2)\bigg[\frac{2\mu+k}{2\mu}|C_k^{\mu}(t)|+|C_{k-1}^{\mu+1}(t)|\bigg]\\
\leq&(2m-2)\bigg[\frac{2\mu+k}{2\mu}\sum_{n=0}^{[\frac{k}{2}]}
\frac{\Gamma(k-n+\mu)2^{k-2n}}{\Gamma(\mu)n!(k-2n)!}+\sum_{n=0}^{[\frac{k-1}{2}]}
\frac{\Gamma(k-n+\mu)2^{k-2n-1}}{\Gamma(\mu+1)n!(k-2n-1)!}\bigg]\\
=:&C_1,\\
&\bigg\vert\frac{\partial Z_k^+(\bbeta,\bov)}{\partial{\eta_i}}\bigg\vert=\bigg\vert\frac{\partial t}{\partial \eta_i}\bigg(\frac{2\mu+k}{2\mu}\frac{dC_k^{\mu}(t)}{dt}+\bbeta\wedge\overline{\bov}\frac{d C_{k-1}^{\mu+1}(t)}{dt}\bigg)\bigg\vert\\
=&\bigg\vert v_i\bigg((2\mu+k)C_{k-1}^{\mu+1}(t)+\bbeta\wedge\overline{\bov}(2\mu+2)C_{k-2}^{\mu+2}(t)\bigg)\bigg\vert\\
\leq &(2\mu+k)|C_{k-1}^{\mu+1}(t)|+(2\mu+2)|C_{k-2}^{\mu+2}(t)|\\
\leq &\sum_{n=0}^{[\frac{k-1}{2}]}
\frac{(2\mu+k)\Gamma(k-n+\mu)2^{k-2n-1}}{\Gamma(\mu+1)n!(k-2n-1)!}+\sum_{n=0}^{[\frac{k-2}{2}]}
\frac{(2\mu+2)\Gamma(k-n+\mu)2^{k-2n-2}}{\Gamma(\mu+2)n!(k-2n-2)!}\\
=:&C_2.
\end{align*}
Therefore, we have that
\begin{align*}
|\partial_{\boy}E_k(\bx-\boy,\bu,\bov)|\leq& C_1|\bx-\boy|^{-m}+8mC_2|\bx-\boy|^{-m}\\
=&(8mC_2+C_1)|\bx-\boy|^{-m},
\end{align*}
and 
\begin{align*}
|I|^2\leq&\bigg(\int_{\Omega}\int_{\Sm}\big\vert \partial_{\boy}E_k(\bx-\boy,\bu,\bov)f(\bx,\bov)\big\vert dS(\bov)d\bx\bigg)^2\\
\leq &\bigg(\int_{\Omega}\int_{\Sm} 2^{\frac{m}{2}}\big\vert \partial_{\boy}E_k(\bx-\boy,\bu,\bov)\big\vert \cdot \big\vert f(\bx,\bov)\big\vert dS(\bov)d\bx\bigg)^2\\
\leq& \bigg((8mC_2+C_1)\int_{\Omega}\int_{\Sm}|\bx-\boy|^{-m}\big\vert f(\bx,\bov)\big\vert dS(\bov)d\bx\bigg)^2.
\end{align*}
By the Calderon and Zygmund Theorem in \cite[Theorem 3.1, XI]{MP}, we obtain
\begin{align}\label{Inorm}
&\int_{\Omega}\int_{\Sm}|I|^2dS(\bu)d\boy\nonumber\\
\leq& \int_{\Omega}\int_{\Sm} \bigg((8mC_2+C_1)\int_{\Omega}\int_{\Sm}|\bx-\boy|^{-m}\big\vert f(\bx,\bov)\big\vert dS(\bov)d\bx\bigg)^2dS(\bu)d\boy\nonumber\\
\leq &(8mC_2+C_1)^2\int_{\Sm}\int_{\Sm}\int_{\Omega}\bigg(\int_{\Omega}|\bx-\boy|^{-m}|f(\bx,\bov)|d\bx\bigg)^2d\boy dS(\bov)dS(\bu)\nonumber\\
\leq& (8mC_2+C_1)^2\int_{\Sm}\omega_{m-1}\int_{\Omega}|f(\bx,\bov)|^2d\bx dS(\bov)dS(\bu)\nonumber\\
=&(8mC_2+C_1)^2\omega_{m-1}^2\int_{\Omega}\int_{\Sm}|f(\bx,\bov)|^2d\bx dS(\bov)\nonumber\\
=&(8mC_2+C_1)^2\omega_{m-1}^2\|f\|_{L^2(\Omega\times\Bm,\Mk^+(\bu))}^2.
\end{align}
Also, we have
\begin{align}\label{IInorm}
&\int_{\Omega}\int_{\Sm}|II|^2dS(\bu)d\boy\nonumber\\
=&\int_{\Omega}\int_{\Sm}|f(\boy,\bu)|^2dS(\bu)d\boy=\|f\|_{L^2(\Omega\times\Bm,\Mk^+(\bu))}^2.
\end{align}
To estimate $|III|^2$, we notice that $\bt,\bu\in\Sm$ and $\bbeta=\bt\bu\bt\in\Sm$, which give us that
\begin{align*}
&|III|\\
=&\bigg\vert c_{m,k}^{-1}\int_{\Sm}\frac{2\overline{\bt}}{2-m}\bigg[\overline{\bu}\langle\bt,\overline{\partial}_{\bbeta}\rangle+\langle\bt,\overline{\bu}\rangle\partial_{\bbeta}+2\overline{\bt}\langle\bt,\overline{\bu}\rangle\langle\bt,\overline{\partial}_{\bbeta}\rangle\bigg]f(\boy,\bt\bu\bt)d\sigma(\bt)\bigg\vert\\
\leq&c_{m,k}^{-1}\int_{\Sm}\frac{2}{2-m}\cdot4\sum_{i=0}^{m-1}\bigg\vert\frac{\partial f(\boy,\bbeta)}{\partial \eta_i}\bigg\vert d\sigma(\bt).
\end{align*}
Since $f(\boy,\bbeta)$ is monogenic in $\bbeta$ with homogeneity $k$, this also implies that $f$ is also harmonic in $\bbeta$ with homogeneity $k$. Then, let $\bzeta=\frac{\bbeta}{2}\in\frac{1}{2}\Sm$ and $r=\frac{1}{4}$ in \eqref{harmonicestimate}, we have
\begin{align*}
\bigg\vert\frac{\partial f(\boy,\bbeta)}{\partial \eta_i}\bigg\vert=2^{k-1}\bigg\vert\frac{\partial f(\boy,\bzeta)}{\partial \zeta_i}\bigg\vert\leq \frac{m2^{2m+k+2}}{\sqrt{V(m)}}\|f(\boy,\cdot)\|_{L^2(\Bm)}.
\end{align*}
Therefore, we have
\begin{align}\label{IIInorm}
&\int_{\Omega}\int_{\Sm}|III|^2dS(\bu)d\boy\leq  \int_{\Omega}\int_{\Sm}\frac{m^22^{4m+2k+4}}{V(m)}\|f(\boy,\cdot)\|^2_{L^2(\Bm)}dS(\bu)d\boy\nonumber\\
=&\frac{m^22^{4m+2k+4}}{V(m)}\int_{\Omega}\int_{\Sm}\int_{\Bm}|f(\boy,\bov)|^2d\bov dS(\bu)d\boy\nonumber\\
=&\frac{m^22^{4m+2k+4}\omega_{m-1}}{V(m)}\int_{\Omega}\int_0^1\int_{\Sm}|f(\boy,r\bgamma)|^2r^{m-1}drdS(\bgamma)d\boy\nonumber\\
=&\frac{m^22^{4m+2k+4}\omega_{m-1}}{V(m)}\int_{\Omega}\int_{\Sm}\int_0^1\int_{\Sm}|f(\boy,\bgamma)|^2r^{m+2k-1}drdS(\bgamma)d\boy\nonumber\\
=&\frac{m^32^{4m+2k+4}}{m+2k}\int_{\Omega}\int_{\Sm}|f(\boy,\bgamma)|^2dS(\bgamma)d\boy\nonumber\\
=&\frac{m^32^{4m+2k+4}}{m+2k}\|f\|^2_{L^2(\Omega\times\Bm,\Mk^+(\bu))},
\end{align}
where the last second equality comes from the fact that $V(m)=\frac{\omega_{m-1}}{m}$. Plugging \eqref{Inorm},\eqref{IInorm},\eqref{IIInorm} into \eqref{Pinorm}, we obtain
\begin{align*}
&\|\Pi f\|_{L^2(\Omega\times\Bm,\Mk^-(\bu))}^2\leq \int_{\Omega}\int_{\Sm}2(|I|^2+|II|^2+|III|^2)dS(\bu)d\boy\\
\leq&2\bigg[(8mC_2+C_1)^2\omega_{m-1}^2\|f\|_{L^2(\Omega\times\Bm,\Mk^+(\bu))}^2+\|f\|_{L^2(\Omega\times\Bm,\Mk^+(\bu))}^2\\
&+\frac{m^32^{4m+2k+4}}{m+2k}\|f\|^2_{L^2(\Omega\times\Bm,\Mk^+(\bu))}\bigg]\\
=&\bigg(2(8mC_2+C_1)^2\omega_{m-1}^2+\frac{m^32^{4m+2k+5}}{m+2k}+2\bigg)\|f\|_{L^2(\Omega\times\Bm,\Mk^+(\bu))}^2,
\end{align*}
which gives us the norm estimate as the following
\begin{align*}
&\|\Pi f\|_{L^2(\Omega\times\Bm,\Mk^-(\bu))}\\
\leq& \sqrt{2(8mC_2+C_1)^2\omega_{m-1}^2+\frac{m^32^{4m+2k+5}}{m+2k}+2}\cdot \|f\|_{L^2(\Omega\times\Bm,\Mk^+(\bu))},
\end{align*}
and this completes the proof.
\end{proof}
\begin{remark}
A similar argument can give the same norm estimate for the $\Pi^{\dagger}$-operator.
\end{remark}
Now, we introduce some properties for the higher spin $\Pi$-operator as follows.
\begin{proposition}\label{propA}
Let $f\in W_2^1(\Omega\times\Bm,\Mk^+(\bu))$ and $g\in W_2^1(\Omega\times\Bm,\Mk^-(\bu))$. Then, we have
$$\Pi R_kf=R_k^{\dagger}f-R_k^{\dagger}F_kf,\ 
\Pi^{\dagger}R_k^{\dagger}g=R_kg-R_kF_k^{\dagger}g.$$
\end{proposition}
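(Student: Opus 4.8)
The plan is to read Proposition~\ref{propA} as nothing more than the Borel--Pompeiu formula of Theorem~\ref{BPF} with the operator $R_k^{\dagger}$ (respectively $R_k$) applied to both sides. In short form Theorem~\ref{BPF} states $f=F_kf+T_k(R_kf)$, so that $T_k(R_kf)=f-F_kf$; applying $R_k^{\dagger}=P_k^-\partial_{\boy}$ to this identity and using $\Pi=R_k^{\dagger}T_k$ yields $\Pi R_kf=R_k^{\dagger}f-R_k^{\dagger}F_kf$, which is the first assertion. The only thing that requires work is to make this manipulation legitimate at the $W_2^1$ level of regularity demanded in the statement.

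To that end I would first extend the Borel--Pompeiu formula from $C^1(\Omega\times\Bm,\Mk^+(\bu))\cap C(\overline{\Omega}\times\Bm,\Mk^+(\bu))$ to $W_2^1(\Omega\times\Bm,\Mk^+(\bu))$ by density: such smooth functions are dense in $W_2^1(\Omega\times\Bm,\Mk^+(\bu))$, and the three operators occurring are continuous in the relevant topologies --- $R_k\colon W_2^1(\Omega\times\Bm,\Mk^+(\bu))\to L^2(\Omega\times\Bm,\Mk^+(\bu))$ is bounded by Proposition~\ref{Rkbound}, $T_k\colon L^2\to W_2^1$ is bounded by the mapping properties established in \cite{DingT}, and $F_k$ is bounded on $W_2^1$ (the restriction of $f\in W_2^1$ to $\partial\Omega$ lies in $L^2(\partial\Omega)$ by the trace theorem, and the Cauchy--Bitsadze operator maps it back into $W_2^1$; equivalently $F_kf=f-T_k(R_kf)\in W_2^1$). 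Hence $f=F_kf+T_k(R_kf)$ holds for every $f\in W_2^1(\Omega\times\Bm,\Mk^+(\bu))$, with both $F_kf$ and $T_k(R_kf)$ lying in $W_2^1$.

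Next I would observe that $R_k^{\dagger}=P_k^-\partial_{\boy}$ is a bounded operator $W_2^1(\Omega\times\Bm,\Mk^+(\bu))\to L^2(\Omega\times\Bm,\Mk^-(\bu))$: for $\Mk^+(\bu)$-valued $h$ the function $\partial_{\boy}h$ is again $\Mk^+(\bu)$-valued, hence harmonic in $\bu$, and $\|P_k^-\partial_{\boy}h\|_{L^2}\le\|\partial_{\boy}h\|_{L^2}$ by the orthogonality identity $\int_{\Sm}\overline{P_k^-g(\bu)}\,\big((I-P_k^-)g(\bu)\big)\,dS(\bu)=0$ exploited in the proof of Theorem~\ref{BdPi}. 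Applying this operator to the $W_2^1$-identity $T_k(R_kf)=f-F_kf$ gives $\Pi R_kf=R_k^{\dagger}f-R_k^{\dagger}F_kf$ as an $L^2$-identity, which is the first formula. The second formula $\Pi^{\dagger}R_k^{\dagger}g=R_kg-R_kF_k^{\dagger}g$ is obtained in exactly the same way from the second Borel--Pompeiu formula $g=F_k^{\dagger}g+T_k^{\dagger}(R_k^{\dagger}g)$: rewrite it as $T_k^{\dagger}(R_k^{\dagger}g)=g-F_k^{\dagger}g$, apply the bounded operator $R_k=P_k^+\overline{\partial}_{\boy}\colon W_2^1(\Omega\times\Bm,\Mk^-(\bu))\to L^2(\Omega\times\Bm,\Mk^+(\bu))$, and use $\Pi^{\dagger}=R_kT_k^{\dagger}$.

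I do not expect a real analytic obstacle: the algebraic content is the one-line rearrangement of Borel--Pompeiu. The point that genuinely needs attention is the function-space bookkeeping of the previous two paragraphs --- verifying that $T_k(R_kf)$ and $F_kf$ actually lie in $W_2^1$ so that $R_k^{\dagger}$ (resp.\ $R_k$) may be applied, and that every operator in play is continuous in the $W_2^1\to L^2$ topology so that the density argument closes --- but all of this is supplied by Proposition~\ref{Rkbound}, Theorem~\ref{BdPi}, and the mapping properties established in \cite{DingT}.
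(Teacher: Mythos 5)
Your proof is correct and takes essentially the same route as the paper's: both rewrite the Borel--Pompeiu formula as $T_k(R_kf)=f-F_kf$ and apply $R_k^{\dagger}$ (resp.\ $T_k^{\dagger}(R_k^{\dagger}g)=g-F_k^{\dagger}g$ and apply $R_k$), using $\Pi=R_k^{\dagger}T_k$ and $\Pi^{\dagger}=R_kT_k^{\dagger}$. The paper states only the one-line algebraic chain $\Pi R_kf=R_k^{\dagger}T_kR_kf=R_k^{\dagger}(I-F_k)f$, whereas you additionally spell out the density and boundedness bookkeeping that justifies running the argument at the $W_2^1$ level; this is a welcome elaboration, not a different method.
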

\begin{proof}
Using the Borel-Pompeiu formula with $R_k$ in Theorem \ref{BPF}, we obtain
\begin{align*}
\Pi R_kf=R_k^{\dagger}T_kR_k f=R_k^{\dagger}(I-F_k)f=R_k^{\dagger}f-R_k^{\dagger}F_kf.
\end{align*}
Similarly, the Borel-Pompeiu formula with $R_k^{\dagger}$ in Theorem \ref{BPF} gives us that
\begin{align*}
\Pi^{\dagger} R_k^{\dagger}g=R_kT_k^{\dagger}R_k^{\dagger} g=R_k(I-F_k^{\dagger})g=R_kg-R_kF_k^{\dagger}g,
\end{align*}
which completes the proof.
\end{proof}
\begin{remark}
	One might notice that some properties of the Dirac operator case given in \cite[Theorem 5]{GK} are not true for the $R_k$ case, since $R_kR_k^{\dagger}\neq R_k^{\dagger}R_k$ in general.
\end{remark}
The Proposition above also leads to the following mapping property.
\begin{proposition}
Let $\Omega\subset\R^m$ be a bounded domain. Then, the relations
\begin{align*}
&\Pi:R_k\left(\mathring{W}_2^1(\Omega\times\Bm,\Mk^+(\bu))\right)\longrightarrow R_k^{\dagger}\left(\mathring{W}_2^1(\Omega\times\Bm,\Mk^+(\bu))\right),\\
&\Pi^{\dagger}: R_k^{\dagger}\left(\mathring{W}_2^1(\Omega\times\Bm,\Mk^+(\bu))\right)\longrightarrow R_k\left(\mathring{W}_2^1(\Omega\times\Bm,\Mk^+(\bu))\right)
\end{align*} 
are true.
\end{proposition}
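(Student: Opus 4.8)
The plan is to reduce the statement to Proposition \ref{propA} combined with the fact that the Cauchy--Bitsadze operators annihilate functions with zero boundary trace. First I would observe that for $f\in C_0^{\infty}(\Omega\times\Bm,\Mk^+(\bu))$ (or, more generally, for $f\in C^1(\overline{\Omega}\times\Bm,\Mk^+(\bu))$ vanishing on $\partial\Omega$) the defining integral of $F_kf$ is a surface integral over $\partial\Omega$ whose integrand contains the factor $f(\bx,\bov)\big|_{\bx\in\partial\Omega}=0$, so $F_kf=0$; likewise $F_k^{\dagger}g=0$ for such $g$. To pass to all of $\mathring{W}_2^1$, write $F_k=I-T_kR_k$ (the short form of the Borel--Pompeiu formula in Theorem \ref{BPF}); since $R_k$ is bounded from $\mathring{W}_2^1(\Omega\times\Bm,\Mk^+(\bu))$ into $L^2$ by Proposition \ref{Rkbound} and $T_k$ is bounded on $L^2$, the operator $F_k$ is continuous on $\mathring{W}_2^1$, and approximating $f\in\mathring{W}_2^1$ by compactly supported smooth functions yields $F_kf=0$ for every $f\in\mathring{W}_2^1(\Omega\times\Bm,\Mk^+(\bu))$; the same argument gives $F_k^{\dagger}g=0$ for every $g\in\mathring{W}_2^1(\Omega\times\Bm,\Mk^-(\bu))$.

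Granting this, the first inclusion is immediate. Let $\varphi\in R_k\left(\mathring{W}_2^1(\Omega\times\Bm,\Mk^+(\bu))\right)$, say $\varphi=R_kf$ with $f\in\mathring{W}_2^1(\Omega\times\Bm,\Mk^+(\bu))$. By Proposition \ref{propA},
\[
\Pi\varphi=\Pi R_kf=R_k^{\dagger}f-R_k^{\dagger}F_kf=R_k^{\dagger}f,
\]
and the right-hand side lies in $R_k^{\dagger}\left(\mathring{W}_2^1(\Omega\times\Bm,\Mk^+(\bu))\right)$ by definition. Hence $\Pi$ sends the first space into the second; in fact the computation shows that $\Pi$ restricted to $R_k(\mathring{W}_2^1)$ coincides with $R_k^{\dagger}$ restricted to $\mathring{W}_2^1$, so the image is precisely $R_k^{\dagger}\left(\mathring{W}_2^1(\Omega\times\Bm,\Mk^+(\bu))\right)$. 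The statement for $\Pi^{\dagger}$ is proved the same way, using the second identity of Proposition \ref{propA}: for $g\in\mathring{W}_2^1(\Omega\times\Bm,\Mk^-(\bu))$ one gets $\Pi^{\dagger}R_k^{\dagger}g=R_kg-R_kF_k^{\dagger}g=R_kg$, which lies in $R_k\left(\mathring{W}_2^1(\Omega\times\Bm,\Mk^+(\bu))\right)$.

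One point that needs care in writing out the details is that Proposition \ref{propA} is stated for $f,g$ in the full Sobolev spaces, so one must check that both sides of the identities extend continuously from a dense class of smooth functions to all of $\mathring{W}_2^1$; this is where the boundedness results of the previous sections enter, namely $R_k:\mathring{W}_2^1\to L^2$ (Proposition \ref{Rkbound}), $\Pi,\Pi^{\dagger}:L^2\to L^2$ (Theorem \ref{BdPi}), and the $L^2$-boundedness of $T_k$, which together make the composite operators continuous so that the identities pass to the limit. The genuine obstacle, and the step I would spend the most effort on, is the rigorous vanishing of $F_kf$ (respectively $F_k^{\dagger}g$) on $\mathring{W}_2^1$: one needs density of smooth functions with compact support in $\mathring{W}_2^1(\Omega\times\Bm,\Mk^{\pm}(\bu))$ together with continuity of the trace $f\mapsto f\big|_{\partial\Omega}$ from $W_2^1$ into $L^2(\partial\Omega)$, which presupposes $\partial\Omega$ is sufficiently regular (e.g. Lipschitz). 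Once this is available, $F_kf$ depends continuously on the boundary trace of $f$ and therefore vanishes identically on $\mathring{W}_2^1$, and the rest of the argument is the short algebraic reduction above.
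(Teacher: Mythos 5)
Your proof follows the same strategy as the paper's: observe that $F_kf=F_k^{\dagger}g=0$ for $f,g$ in the homogeneous Sobolev spaces, then invoke Proposition \ref{propA} to conclude $\Pi R_kf=R_k^{\dagger}f$ and $\Pi^{\dagger}R_k^{\dagger}g=R_kg$. The paper simply asserts the vanishing of the Cauchy--Bitsadze operators without justification, whereas you correctly flag and fill in the density-and-continuity argument needed to extend from $C_0^{\infty}$ to $\mathring{W}_2^1$, which is a welcome but not essentially different refinement.
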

\begin{proof}
Let $f\in \mathring{W}_2^1(\Omega\times\Bm,\Mk^+(\bu))$ and $g\in \mathring{W}_2^1(\Omega\times\Bm,\Mk^-(\bu))$, then we know that $F_k f=F_k^{\dagger}g=0$. Proposition \ref{propA} immediately gives us the desired results.
\end{proof}
A connection between $\Pi$ and $\Pi^{\dagger}$ is given as follows.
\begin{theorem}\label{ThmId}
Let $\Omega\subset\R^m$ be a bounded domain and $0\leq s<\infty$. Suppose $f\in W^s_2(\Omega\times\Bm,\Mk^+(\bu))$ and $g\in W^s_2(\Omega\times\Bm,\Mk^-(\bu))$, Then, we have
\begin{align*}
&\Pi^{\dagger}\Pi f=f-R_kF_k^{\dagger}T_kf,\\
&\Pi\Pi^{\dagger} g=g-R_k^{\dagger}F_kT_k^{\dagger}g.
\end{align*}
\end{theorem}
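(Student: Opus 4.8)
The plan is to unfold the definitions and reduce everything to the Borel--Pompeiu formula (Theorem~\ref{BPF}) together with the right-inverse relations $R_kT_k=I$ and $R_k^{\dagger}T_k^{\dagger}=I$ of Proposition~\ref{Inverse}. Writing out the compositions, $\Pi^{\dagger}\Pi f=R_kT_k^{\dagger}R_k^{\dagger}T_kf$ and $\Pi\Pi^{\dagger}g=R_k^{\dagger}T_kR_kT_k^{\dagger}g$, so in each case we have a Teodorescu transform, then the conjugate Rarita--Schwinger operator, then the conjugate Teodorescu transform, then a Rarita--Schwinger operator. The idea is that the middle block $T_k^{\dagger}R_k^{\dagger}$ (resp.\ $T_kR_k$) applied to the inner Teodorescu transform collapses, by Borel--Pompeiu, to ``identity minus a Cauchy--Bitsadze term''.

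For the first identity I would set $h:=T_kf$; then $R_kh=R_kT_kf=f$ by Proposition~\ref{Inverse}. Applying the Borel--Pompeiu formula attached to $R_k^{\dagger}$ to $h$ gives $h=F_k^{\dagger}h+T_k^{\dagger}R_k^{\dagger}h$, hence $T_k^{\dagger}R_k^{\dagger}T_kf=T_kf-F_k^{\dagger}T_kf$; applying $R_k$ and using $R_kT_k=I$ yields $\Pi^{\dagger}\Pi f=R_k(T_kf-F_k^{\dagger}T_kf)=f-R_kF_k^{\dagger}T_kf$. The second identity is obtained symmetrically with $h:=T_k^{\dagger}g$, using the Borel--Pompeiu formula attached to $R_k$ (so that $T_kR_kT_k^{\dagger}g=T_k^{\dagger}g-F_kT_k^{\dagger}g$), then applying $R_k^{\dagger}$ and using $R_k^{\dagger}T_k^{\dagger}=I$. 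Since the Borel--Pompeiu machinery is classical, one first proves the identities for $f\in C^{1}(\overline{\Omega}\times\Bm,\Mk^{+}(\bu))$ (resp.\ $g\in C^{1}(\overline{\Omega}\times\Bm,\Mk^{-}(\bu))$) and then extends to $W_2^{s}$ by density, using the $L^{2}$-boundedness of $\Pi,\Pi^{\dagger}$ (Theorem~\ref{BdPi}), the boundedness of $R_k$ on $W_2^{1}$ (Proposition~\ref{Rkbound}) and the mapping properties of $T_k,T_k^{\dagger},F_k,F_k^{\dagger}$; as $W_2^{s}\hookrightarrow L^{2}$ this covers every $s\ge 0$.

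The step I expect to be the real obstacle is the application of Borel--Pompeiu to $h=T_kf$: Theorem~\ref{BPF} is stated for $\Mk^{-}(\bu)$-valued functions, while $T_kf$ is $\Mk^{+}(\bu)$-valued (and dually $T_k^{\dagger}g$ is $\Mk^{-}(\bu)$-valued whereas the $R_k$-version needs $\Mk^{+}(\bu)$-valued arguments). To get around this I would use that $R_k^{\dagger}=P_k^{-}\partial_{\boy}$, that $P_k^{-}$ commutes with $\partial_{\boy}$ and annihilates the $\bu\,\Mkk^{+}(\bu)$-valued part, so that $R_k^{\dagger}T_kf=R_k^{\dagger}(P_k^{-}T_kf)$ with $P_k^{-}T_kf$ now $\Mk^{-}(\bu)$-valued, and then re-run the Stokes plus fundamental-solution argument behind Theorem~\ref{BPF} on $\Omega\setminus B(\boy,\epsilon)$ with $E_k^{\dagger}$ as $\epsilon\to 0$; the essential ingredient is that the zonal kernel $Z_k^{-}$ implements the Fischer projection onto $\Mk^{-}(\bu)$, which follows from $\overline{Z_k^{-}(\bu,\bov)}=Z_k^{-}(\bov,\bu)$ and the orthogonality of the Fischer decomposition already used in Proposition~\ref{Rkbound} and Theorem~\ref{BdPi}. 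This produces $T_k^{\dagger}R_k^{\dagger}T_kf=P_k^{-}T_kf-F_k^{\dagger}(P_k^{-}T_kf)$, after which one still has to check that $R_kP_k^{-}T_kf=R_kT_kf=f$ and that the Cauchy--Bitsadze term can be rewritten with $T_kf$ in place of $P_k^{-}T_kf$ once $R_k$ is applied; since $R_k$ does not kill $\bu\,\Mkk^{+}(\bu)$-valued functions in general, this last verification is more than projection bookkeeping and is where I would expect to rely on the explicit form of $E_k,E_k^{\dagger}$ or on a further identity for $F_k,F_k^{\dagger}$ on the complementary Fischer summands.
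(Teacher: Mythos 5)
Your overall strategy is exactly the paper's: write out $\Pi^{\dagger}\Pi f=R_kT_k^{\dagger}R_k^{\dagger}T_kf$, collapse $T_k^{\dagger}R_k^{\dagger}$ via the Borel--Pompeiu formula attached to $R_k^{\dagger}$, and finish with $R_kT_k=I$. The paper's proof is precisely the one-line algebraic chain
\begin{align*}
R_kT_k^{\dagger}R_k^{\dagger}T_kf=R_k(I-F_k^{\dagger})T_kf=R_kT_kf-R_kF_k^{\dagger}T_kf=f-R_kF_k^{\dagger}T_kf,
\end{align*}
and its mirror for the second identity; it treats $T_k^{\dagger}R_k^{\dagger}+F_k^{\dagger}=I$ as an operator identity applied to $T_kf$ without any further comment, so the Fischer-type issue you flag is indeed left unaddressed in the paper.

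Your attempted repair, however, has a concrete flaw. You claim that $P_k^{-}$ commutes with $\partial_{\boy}$, so that $R_k^{\dagger}T_kf=R_k^{\dagger}(P_k^{-}T_kf)$. This fails: $P_k^{-}=1-\tfrac{\bu\,\partial_{\bu}}{m+2k-2}$ acts by left Clifford multiplication by $\bu$ and by the generators $\be_i$ hidden inside $\partial_{\bu}$, while $\partial_{\boy}=\partial_{y_0}-\sum_j\be_j\partial_{y_j}$ also acts by left multiplication by the generators; since $\be_j\bu\neq\bu\be_j$, the operators $P_k^{-}$ and $\partial_{\boy}$ do not commute, and in general $P_k^{-}\partial_{\boy}\,Q_k^{-}T_kf\neq 0$, so $R_k^{\dagger}T_kf$ is not $R_k^{\dagger}$ applied to the $\Mk^{-}(\bu)$-valued component $P_k^{-}T_kf$. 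Thus the reduction to the domain of Theorem~\ref{BPF} does not come for free, and the residual difficulty you anticipate at the end of your write-up is genuine: either the Borel--Pompeiu identity $g=F_k^{\dagger}g+T_k^{\dagger}R_k^{\dagger}g$ must be shown to extend to $\Hk(\bu)$-valued (in particular $\Mk^{+}(\bu)$-valued) $g$ by re-running the Stokes/fundamental-solution argument with Lemma~\ref{harortho}, or one must show that the contribution of $Q_k^{-}T_kf$ disappears after the final $R_k$. Neither is carried out in the paper, and your proposal does not close this gap either.
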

\begin{proof}
These statements can be justified by a straightforward calculation. Indeed, we have
\begin{align*}
&\Pi^{\dagger}\Pi f=R_kT_k^{\dagger}R_k^{\dagger}T_kf=R_k(I-F_k^{\dagger})T_kf\\
=&(R_kT_k-R_kF_k^{\dagger}T_k)f=(I-R_kF_k^{\dagger}T_k)f=f-R_kF_k^{\dagger}T_kf,\\
&\Pi\Pi^{\dagger} g=R_k^{\dagger}T_kR_kT_k^{\dagger}g=R_k^{\dagger}(I-F_k)T_k^{\dagger}g\\
=&(R_k^{\dagger}T_k^{\dagger}-R_k^{\dagger}F_kT_k^{\dagger})g=(I-R_k^{\dagger}F_kT_k^{\dagger})g=g-R_k^{\dagger}F_kT_k^{\dagger}g,
\end{align*}
which completes the proof.
\end{proof}
Next, we investigate the adjoint operator of $\Pi$, denoted by $\Pi^*$, with respect to the following $\Clm$-valued inner product
\begin{align*}
\langle f, g\rangle=\int_{\Omega}\int_{\Sm}\overline{f(\boy,\bu)}g(\boy,\bu)dS(\bu)d\boy,
\end{align*}
where $f,g\in L^2(\Omega\times\Bm,\Mk^+(\bu))$.
\begin{theorem}
Let $f\in \mathring{W}_2^s(\Omega\times\Bm,\Mk^+(\bu))$ and $g\in \mathring{W}_2^s(\Omega\times\Bm,\Mk^-(\bu))$ with $s\geq 1$. Then, we have
\begin{align*}
\Pi^*f=T_k^{\dagger}R_kf,\ \Pi^{\dagger*}g=T_kR_k^{\dagger}g.
\end{align*}
\end{theorem}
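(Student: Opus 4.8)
The plan is to compute $\langle\Pi f,g\rangle$ and reshape it into $\langle f,T_k^{\dagger}R_kg\rangle$ by peeling off $R_k^{\dagger}$ first and then $T_k$, moving each operator across the inner product. The hypotheses $f,g\in\mathring{W}_2^s$, $s\geq 1$, play two roles: they make $R_kf$ and $R_kg$ meaningful, and they force the traces of $f$ and $g$ on $\partial\Omega$ to vanish, so every boundary integral produced along the way drops out. Since the Stokes' theorems and the Borel--Pompeiu formula of Sections 3--4 are stated for $C^1$ functions, I would first establish the identity for $f,g\in C_0^{\infty}$ and then extend it to all of $\mathring{W}_2^s$ by density, using that $\Pi$ is bounded (Theorem \ref{BdPi}) and that $T_k^{\dagger}R_k$ is bounded as well (since $R_k\colon W_2^1\to L^2$ is bounded by Proposition \ref{Rkbound} and $T_k^{\dagger}$ is $L^2$-bounded). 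Throughout I would use the orthogonal substitution $\bu\mapsto\overline{\bu}$ on $\Sm$, which preserves $dS(\bu)$ and intertwines $\ker\overline{\partial}_{\bu}$ with $\ker\partial_{\bu}$, thereby identifying $\Mk^+(\bu)$ with $\Mk^-(\bu)$.

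\emph{Step 1 (adjointness of $R_k$ and $R_k^{\dagger}$).} Writing $R_k^{\dagger}=P_k^-\partial_{\boy}$, using the Clifford conjugation rule $\overline{\partial_{\boy}h}=\overline{h}\,\overline{\partial}_{\boy}$ for the Cauchy--Riemann operators, the Stokes' theorem applied to $\overline{h}$ and $g$, and the self-adjointness of the Fischer projections $P_k^{\pm}$ with respect to $(\cdot,\cdot)_{\bu}$ --- which is exactly the orthogonality relation $\int_{\Sm}\overline{P_k^{\pm}g}\,(Q_k^{\pm}g)\,dS(\bu)=0$ already exploited in Propositions \ref{Rkbound} and \ref{BdPi} --- one obtains $\langle R_k^{\dagger}h,g\rangle=-\langle h,R_kg\rangle$ as soon as $g$ vanishes on $\partial\Omega$. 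Taking $h=T_kf$ (its boundary values are irrelevant, it being $g$ that vanishes on $\partial\Omega$) gives $\langle\Pi f,g\rangle=-\langle T_kf,R_kg\rangle$. \emph{Step 2 (adjointness of $T_k$ and $T_k^{\dagger}$).} By Fubini, the adjoint of the integral operator $T_k$ has kernel $\overline{E_k(\bx-\boy,\bu,\bov)}$ with the space points and the spinor slots interchanged, so the matter reduces to the pointwise identity expressing $\overline{E_k(\bx-\boy,\bu,\bov)}$ through $E_k^{\dagger}$ with its arguments suitably permuted and conjugated. Reducing $\bx-\boy$ to a unit paravector by homogeneity, this is precisely where the zonal-kernel symmetries of Section 2 are used: $\overline{Z_k^+(\bu,\bov)}=Z_k^+(\bov,\bu)$ from \eqref{kernelconjugate}, the reflection identity $Z_k^{\pm}(\bu,\bola\bov\bola)\bola=\bola Z_k^{\pm}(\overline{\bola}\bu\overline{\bola},\bov)$, and $Z_k^+(\bu,\bov)=Z_k^-(\overline{\bu},\overline{\bov})$ (the last invoked via $\bu\mapsto\overline{\bu}$). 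This yields $\langle T_kf,R_kg\rangle=-\langle f,T_k^{\dagger}R_kg\rangle$, whence, combining with Step 1, $\langle\Pi f,g\rangle=\langle f,T_k^{\dagger}R_kg\rangle$, i.e. $\Pi^*f=T_k^{\dagger}R_kf$. The companion identity $\Pi^{\dagger*}g=T_kR_k^{\dagger}g$ follows by the same argument with $(R_k,T_k)$ and $(R_k^{\dagger},T_k^{\dagger})$ exchanged.

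I expect the main obstacle to be Step 2, namely making the relation between $\overline{E_k}$ and $E_k^{\dagger}$ precise. One must keep track simultaneously of the Clifford conjugation (in particular $\overline{\be_j}=-\be_j$ and the grade-dependent signs when conjugating $Z_k^{\pm}$), the two twisted variables $\boldsymbol{z}\bu\boldsymbol{z}$ and $\overline{\boldsymbol{z}}\bu\overline{\boldsymbol{z}}$ sitting inside the fundamental solutions (with $\boldsymbol{z}=\bx-\boy$), and the substitution $\bu\mapsto\overline{\bu}$ that turns $Z_k^-$ into $Z_k^+$. The delicate point is to check that the net sign coming out of this computation is the same minus sign produced by the Cauchy--Riemann conjugation rule in Step 1, so that the two cancel and no spurious constant is left over in $\Pi^*=T_k^{\dagger}R_k$.
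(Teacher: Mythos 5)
Your proposal is correct and follows essentially the same route as the paper: factor $\Pi^*=T_k^*(R_k^{\dagger})^*$, obtain the adjointness relation for $R_k^{\dagger}$ from the Rarita--Schwinger Stokes' theorem with the boundary term killed by $f,g\in\mathring{W}_2^s$, and identify $T_k^*$ via Fubini together with the zonal-kernel symmetries of Section~2 (namely $\overline{Z_k^+(\bu,\bov)}=Z_k^+(\bov,\bu)$, the reflection identity, and the resulting $\overline{E_k(\bx-\boy,\bu,\bov)}=-E_k^{\dagger}(\boy-\bx,\bov,\bu)$). You are in fact more careful with the sign bookkeeping than the printed proof --- the paper's intermediate equation $R_k^*=R_k^{\dagger}$ omits the minus sign produced by Stokes, which is precisely the one needed to cancel the minus in $T_k^*=-T_k^{\dagger}$ so that $\Pi^*=T_k^{\dagger}R_k$ comes out with the right sign --- and you also spell out the $C_0^{\infty}$-to-$\mathring{W}_2^s$ density extension that the paper leaves implicit.
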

\begin{proof}
From the definition of the higher spin $\Pi$-operator and the $L^2$ adjoint operator, we have
\begin{align*}
\langle \Pi f, g\rangle=\langle R_k^{\dagger}T_kf,g\rangle=\langle T_kf, R_k^{\dagger*}g\rangle=\langle f,T_k^*R_k^{\dagger*}g\rangle.
\end{align*}
We know that
\begin{align*}
&\langle T_k f,g\rangle=\int_{\Omega}\int_{\Sm}\overline{T_k f(\boy,\bu)}g(\boy,\bu)dS(\bu)d\boy\\
=&-\int_{\Omega}\int_{\Sm}\bigg(\int_{\Omega}\int_{\Sm}\overline{E_k(\bx-\boy,\bu,\bov)f(\bx,\bov)}dS(\bov)d\bx\bigg)g(\boy,\bu)dS(\bu)d\boy\\
=&-\int_{\Omega}\int_{\Sm}\bigg(\int_{\Omega}\int_{\Sm}\overline{f(\bx,\bov)}\cdot\overline{E_k(\bx-\boy,\bu,\bov)}dS(\bov)d\bx\bigg)g(\boy,\bu)dS(\bu)d\boy.
\end{align*}
According to \eqref{kernelconjugate}, we know
\begin{align*}
&\overline{E_k(\bx-\boy,\bu,\bov)}=c_{m,k}^{-1}\overline{Z_k\left(\frac{(\bx-\boy)\bu(\bx-\boy)}{|\bx-\boy|^2},\bov\right)}\frac{\bx-\boy}{|\bx-\boy|^m}\\
=&c_{m,k}^{-1}Z_k\left(\bov,\frac{(\bx-\boy)\bu(\bx-\boy)}{|\bx-\boy|^2}\right)\frac{\bx-\boy}{|\bx-\boy|^m}\\
=&c_{m,k}^{-1}\frac{\bx-\boy}{|\bx-\boy|^m}Z_k\left(\frac{(\overline{\bx-\boy})\bov(\overline{\bx-\boy})}{|\bx-\boy|^2},\bu\right)=E_k^{\dagger}(\bx-\boy,\bov,\bu)\\
=&-E_k^{\dagger}(\boy-\bx,\bov,\bu).
\end{align*}
Therefore, $\langle T_k f,g\rangle$ becomes
\begin{align*}
=&-\int_{\Omega}\int_{\Sm}\bigg(\int_{\Omega}\int_{\Sm}\overline{f(\bx,\bov)}\cdot E_k^{\dagger}(\bx-\boy,\bov,\bu)dS(\bov)d\bx\bigg)g(\boy,\bu)dS(\bu)d\boy\\
=&\int_{\Omega}\int_{\Sm}\overline{f(\bx,\bov)}\bigg(\int_{\Omega}\int_{\Sm}E_k^{\dagger}(\boy-\bx,\bov,\bu)g(\boy,\bu)dS(\bu)d\boy\bigg)dS(\bov)d\bx\\
=&-\int_{\Omega}\int_{\Sm}\overline{f(\bx,\bov)}(T_k^{\dagger}g(\bx,\bov))dS(\bov)d\bx=\langle f,-T_k^{\dagger}g\rangle,
\end{align*}
and this gives us that $T_k^*=-T_k^{\dagger}$. Furthermore, 
\begin{align*}
&\langle R_k f,g\rangle=\int_{\Omega}\int_{\Sm}\overline{R_k f(\boy,\bu)}f(\boy,\bu)dS(\bu)d\boy\\
=&\int_{\Omega}\int_{\Sm}\bigg(\overline{f(\boy,\bu)}R_k^{\dagger}\bigg)f(\boy,\bu)dS(\bu)d\boy\\
=&\int_{\Omega}\int_{\Sm}\overline{f(\boy,\bu)}\bigg(R_k^{\dagger}f(\boy,\bu)\bigg)dS(\bu)d\boy\\
=&\langle f,R_k^{\dagger}g\rangle,
\end{align*}
which implies that $R_k^*=R_k^{\dagger}$. Therefore, we have
\begin{align*}
\langle \Pi f, g\rangle=\langle f,T_k^*R_k^{\dagger*}g\rangle=\langle f,-T_k^{\dagger}R_kg\rangle,
\end{align*}
which says that $\Pi^*=T_k^{\dagger}R_k$. Similar argument can gives us that $\Pi^{\dagger *}=T_kR_k^{\dagger}$, which completes the proof.
\end{proof}

\section{Existence of solutions to the higher spin Beltrami equation}
In \cite{CK,GK}, the authors used the norm estimate of a generalized $\Pi$-operator to establish the existence of solutions to a hypercomplex Beltrami equation. In this section, we will firstly introduce a higher spin Beltrami equation in the framework of Clifford analysis, then a sufficient condition for existence of solutions to the higher spin Beltrami equation will be provided.
\par
Let $\Omega\subset\R^m$ be a bounded domain with smooth boundary $\partial\Omega$ and $f\in C^1(\Omega\times\Bm,\Mk^+(\bu))$.
\ Moreover, let $\omega\in C^1(\Omega\times\Bm,\Mk^+(\bu))$. Then, we call the equation
\begin{align}\label{I1}
R_k\omega=fR_k^{\dagger}\omega
\end{align}
a \emph{higher spin Beltrami equation}. In particular, when $k=0$, all terms on the variable $\bu$ disappear, it reduces to the classical Beltrami equation. Before, we introduce existence of solutions to the higher spin Beltrami equation, we need to following technical lemma.
\begin{lemma}
Suppose $\omega\in W_2^1(\Omega\times\Bm,\Mk^+(\bu))$. Therefore exists a function $\phi\in L^2(\Omega\times\Bm,\Mk^+(\bu))$ satisfying $R_k\phi=0$ and $h\in L^2(\Omega\times\Bm,\Mk^+(\bu))$ such that
\begin{align}\label{I2}
\omega=\phi+T_kh.
\end{align}
\end{lemma}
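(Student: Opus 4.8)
The plan is to extract the decomposition \eqref{I2} straight from the Borel--Pompeiu formula and then remove the regularity assumption by density. First I would set
\begin{align*}
h:=R_k\omega,\qquad \phi:=\omega-T_kh .
\end{align*}
By Proposition \ref{Rkbound}, $R_k$ sends $W_2^1(\Omega\times\Bm,\Mk^+(\bu))$ boundedly into $L^2(\Omega\times\Bm,\Mk^+(\bu))$, so $h\in L^2(\Omega\times\Bm,\Mk^+(\bu))$; and since the Teodorescu transform $T_k$ is bounded on $L^2$ (in fact $T_k\colon L^2\to W_2^1$ is bounded, cf.\ \cite{DingT}), we get $T_kh\in L^2(\Omega\times\Bm,\Mk^+(\bu))$ and hence $\phi\in L^2(\Omega\times\Bm,\Mk^+(\bu))$. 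With these choices the identity $\omega=\phi+T_kh$ holds by construction, so the whole content of the lemma reduces to checking that $R_k\phi=0$.

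For this, note that $R_k\phi=R_k\omega-R_kT_kh=h-R_kT_kh$. If $h$ happened to lie in $C^1(\Omega\times\Bm,\Mk^+(\bu))\cap C(\overline{\Omega}\times\Bm,\Mk^+(\bu))$, Proposition \ref{Inverse} would give $R_kT_kh=h$ and we would be done. In general I would extend the right-inverse identity $R_kT_k=\mathrm{id}$ from smooth functions to all of $L^2(\Omega\times\Bm,\Mk^+(\bu))$ by density: choose $h_n\in C^\infty(\overline{\Omega}\times\Bm,\Mk^+(\bu))$ with $h_n\to h$ in $L^2$ — such approximations exist because $\partial\Omega$ is smooth and the pointwise constraint of being $\Mk^+(\bu)$-valued is linear, hence preserved by the usual mollification carried out coefficientwise in $\bu$ — and then $T_kh_n\to T_kh$ in $W_2^1$, $R_kT_kh_n=h_n\to h$ in $L^2$, while continuity of $R_k\colon W_2^1\to L^2$ forces $R_kT_kh_n\to R_kT_kh$, so $R_kT_kh=h$. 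Equivalently, one may approximate $\omega$ itself by smooth $\omega_n$, apply the Borel--Pompeiu formula (Theorem \ref{BPF}) to each, set $\phi_n:=F_k\omega_n$ (so that $\omega_n=\phi_n+T_k(R_k\omega_n)$ and $R_k\phi_n=0$, the latter because the fundamental solution $E_k$ is a null solution of $R_k$ away from its singularity), and pass to the limit $\phi_n\to\phi$ in $L^2$.

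The delicate point, and the only genuine obstacle, is this last passage to the limit: $\phi$ carries no a priori Sobolev regularity, so "$R_k\phi=0$" must first be read in the distributional sense — testing the Rarita--Schwinger Stokes' formula of Theorem \ref{Stokes} against compactly supported functions and using $R_k\phi_n=0$ together with $\phi_n\to\phi$ in $L^2$ — after which ellipticity of the Rarita--Schwinger operator upgrades this to $\phi\in C^\infty$ with $R_k\phi=0$ in the classical sense. Once that is in place we have $\phi\in\ker R_k$, $\phi,h\in L^2(\Omega\times\Bm,\Mk^+(\bu))$ and $\omega=\phi+T_kh$, which is exactly the assertion.
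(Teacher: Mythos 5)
Your proof takes essentially the same route as the paper's: set $h=R_k\omega$, $\phi=\omega-T_kh$, and invoke the right-inverse property $R_kT_k=\mathrm{id}$ to conclude $R_k\phi=0$. You are, however, more careful than the paper itself: the paper's proof simply writes $R_k\phi=R_k\omega-R_kT_kh=0$, silently applying Proposition \ref{Inverse} to the merely-$L^2$ function $h$ even though that proposition is stated only for $C^1\cap C$ data, whereas you correctly flag the regularity mismatch and close it by density (using boundedness of $T_k\colon L^2\to W^1_2$ and of $R_k\colon W^1_2\to L^2$), which is the right way to make the step rigorous.
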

\begin{proof}
Given $\omega\in W_2^1(\Omega\times\Bm,\Mk^+(\bu))$, let $h=R_k\omega$, the mapping properties in Proposition \ref{Rkbound} tells us that $h\in L^2(\Omega\times\Bm,\Mk^+(\bu))$. Furthermore, let $\phi=\omega-T_k h$, we immediately have $R_k\phi=R_k\omega-R_kT_k h=0$, which completes the proof. 
\end{proof}
Now, we plug the substitution \eqref{I2} in to \eqref{I1} to obtain
\begin{align*}
R_k(\phi+T_k h)=f\big(R_k^{\dagger}(\phi+T_k h)\big),
\end{align*}
since $T_k$ is the right inverse of $R_k$, and $R_k\phi=0$, the equation above becomes
\begin{align}\label{I3}
h=f\big(R_k^{\dagger}\phi+\Pi h\big).
\end{align}
This implies that, on the one hand, $\omega$ is a solution of \eqref{I1}, if $h$ is a solution of \eqref{I3}. On the other hand, each solution of \eqref{I1} can be represented by \eqref{I2}.
\par
Before introducing our main result in this section, we review the Banach fixed-point theorem as follows.
\begin{theorem}[Banach Fixed-Point Theorem]\cite{BFPT}
	Let $(X,d)$ be a non-empty complete metric space. A mapping $T: X\longrightarrow X$ is called a contraction mapping on $X$ if there exists $q\in[0,1)$, such that $d(T(x),T(y))\leq qd(x,y)$. Such $T$ admits a unique fixed-point $x^*\in X$, which means $T(x^*)=x^*$.
\end{theorem}
Now, we claim that
\begin{theorem}
Let $f\in L^{\infty}(\Omega\times\Bm,\Mk^+(\bu))$, and $\|f\|_{L^{\infty}(\Omega\times\Bm,\Mk^+(\bu))}\leq \rho< C^{-1}$, where $C$ is given in Theorem \ref{BdPi}. Then, our singular integral equation \eqref{I3} has a unique solution $h\in \mathcal{L}^2(\Omega\times\Bm,\Mk^+(\bu))$. Furthermore, our higher spin Beltrami equation \eqref{I1} also has a solution given by $\omega=\phi+T_kh$.
\end{theorem}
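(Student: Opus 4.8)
The plan is to recast the singular integral equation \eqref{I3} as a fixed-point equation for a contraction on the Banach space $L^2(\Omega\times\Bm,\Mk^+(\bu))$ and to invoke the Banach Fixed-Point Theorem, the contraction ratio being controlled by the norm estimate in Theorem \ref{BdPi}. Here $\phi$ is the fixed $R_k$-null solution from the decomposition \eqref{I2}, carrying the regularity (from the smoothing of $T_k$ on $L^2$) that makes $R_k^{\dagger}\phi\in L^2(\Omega\times\Bm,\Mk^-(\bu))$ meaningful. Define
\begin{align*}
\mathcal{T}h:=f\big(R_k^{\dagger}\phi+\Pi h\big),\qquad h\in L^2(\Omega\times\Bm,\Mk^+(\bu)).
\end{align*}
The first task is to verify that $\mathcal{T}$ maps $L^2(\Omega\times\Bm,\Mk^+(\bu))$ into itself: by Theorem \ref{BdPi} the higher spin $\Pi$-operator is bounded on $L^2$, so $R_k^{\dagger}\phi+\Pi h\in L^2$, and left multiplication by $f\in L^{\infty}(\Omega\times\Bm,\Mk^+(\bu))$ (with the product read as in \eqref{I1}) keeps the result in $L^2$, with $\|\mathcal{T}h\|_{L^2}\le\|f\|_{L^{\infty}}\big(\|R_k^{\dagger}\phi\|_{L^2}+C\|h\|_{L^2}\big)$. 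By construction, $h$ solves \eqref{I3} if and only if $h=\mathcal{T}h$.

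The second step is the contraction estimate. Since $\Pi$ is linear, for any $h_1,h_2\in L^2(\Omega\times\Bm,\Mk^+(\bu))$ we have $\mathcal{T}h_1-\mathcal{T}h_2=f\,\Pi(h_1-h_2)$, so, using the pointwise Clifford-product bound together with Theorem \ref{BdPi},
\begin{align*}
\|\mathcal{T}h_1-\mathcal{T}h_2\|_{L^2}\le\|f\|_{L^{\infty}}\,\|\Pi(h_1-h_2)\|_{L^2}\le\rho\,C\,\|h_1-h_2\|_{L^2}.
\end{align*}
The hypothesis $\|f\|_{L^{\infty}}\le\rho<C^{-1}$ gives $\rho C<1$, so $\mathcal{T}$ is a contraction; the Banach Fixed-Point Theorem then produces a unique fixed point $h\in L^2(\Omega\times\Bm,\Mk^+(\bu))$, i.e. a unique solution of \eqref{I3}.

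Finally I would recover a solution of the higher spin Beltrami equation \eqref{I1}. Put $\omega:=\phi+T_kh$ (which lies in $W_2^1$ by the smoothing of $T_k$). Because $T_k$ is the right inverse of $R_k$ (Proposition \ref{Inverse}) and $R_k\phi=0$, we get $R_k\omega=R_kT_kh=h$; on the other hand $R_k^{\dagger}\omega=R_k^{\dagger}\phi+R_k^{\dagger}T_kh=R_k^{\dagger}\phi+\Pi h$ directly from the definition of $\Pi$. Inserting these two identities into $h=f(R_k^{\dagger}\phi+\Pi h)$ yields $R_k\omega=fR_k^{\dagger}\omega$, which is exactly \eqref{I1}; this is the reverse of the passage from \eqref{I2} to \eqref{I3} carried out before the statement, and by that same discussion every solution of \eqref{I1} is of this form.

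The routine part is the Picard iteration itself; the points that genuinely require care are (i) confirming that $\mathcal{T}$ really maps into $L^2(\Omega\times\Bm,\Mk^+(\bu))$ — that the product in \eqref{I1} of an $\Mk^+(\bu)$-valued factor with the $\Mk^-(\bu)$-valued factor $R_k^{\dagger}\phi+\Pi h$ is again (or can be identified with) an $\Mk^+(\bu)$-valued $L^2$ function — and that $\|fg\|_{L^2}\le\|f\|_{L^{\infty}}\|g\|_{L^2}$ holds with the normalisation that keeps $C$ from Theorem \ref{BdPi} the correct constant; and (ii) extending the identities $R_kT_k=I$ and $\Pi h=R_k^{\dagger}T_kh$ to merely $L^2$ data $h$, which one handles by approximating $h$ in $L^2$ by elements of $C_0^{\infty}(\Omega\times\Bm,\Mk^+(\bu))$ and using continuity of the operators involved.
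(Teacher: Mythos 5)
Your proposal is correct and follows essentially the same route as the paper: recast \eqref{I3} as a fixed-point equation $h=\mathcal{T}h$ with $\mathcal{T}h=f(R_k^{\dagger}\phi+\Pi h)$, show $\mathcal{T}$ is a contraction with ratio $\rho C<1$ via the norm estimate of Theorem \ref{BdPi}, and invoke the Banach Fixed-Point Theorem. Your write-up is in fact cleaner than the paper's (which contains leftover notation $\overline{G}$ and $\Pi_{\Omega_D}$ in place of $R_k^{\dagger}$ and $\Pi$), and you also supply the final verification that $\omega=\phi+T_kh$ solves \eqref{I1}, which the paper leaves implicit.
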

\begin{proof}
We denote $$T:\ h\rightarrow fR_k^{\dagger}\phi+f\Pi h,$$ and in our case, we have
	\begin{align*}	
		&||T(h_1)-T(h_2)||_{L^2(\Omega\times\Bm,\Mk^+(\bu))}\\
		=&||f(R_k^{\dagger}\phi+\Pi h_1)-f(R_k^{\dagger}\phi+\Pi h_2)||_{L^2(\Omega\times\Bm,\Mk^+(\bu))}\\
		&\leq||f\Pi ||_{L^2(\Omega\times\Bm,\Mk^+(\bu))}\cdot||h_1-h_2||_{L^2(\Omega\times\Bm,\Mk^+(\bu))}.
	\end{align*}
	Therefore, with the Banach Fixed-Point Theorem, when 
	\begin{align*}		
	|| f||_{L^{\infty}(\Omega\times\Bm,\Mk^+(\bu))}\leq\rho<C^{-1},
\end{align*}
$T$ is contractive, which leads to the existence of a unique solution to \eqref{I3}. Applying our norm estimate in Theorem \ref{BdPi} for the $\Pi$-operator we get the condition 
 	\begin{align*}	
 	|| f||_{L^{\infty}(\Omega\times\Bm,\Mk^+(\bu))}\leq\rho<C^{-1}
 \end{align*}being sufficient for the existence of a solution of equation \eqref{I3}. The proof is completed.
\end{proof}
\subsection*{Acknowledgments}
The work of Chao Ding is supported by the National Natural Science Foundation of China (No. 12271001).
\subsection*{Disclosure statement}
No potential conflict of interest was reported by the author. Data sharing not applicable to this article as no datasets were generated or analyzed during the current study.

\end{document}